\documentclass[12pt]{scrartcl}
\usepackage[english]{babel}
\usepackage[utf8x]{inputenc}
\usepackage{amsfonts}
\usepackage{amsmath}
\usepackage{amssymb}
\usepackage{amsthm}
\usepackage{mathrsfs}
\usepackage[colorlinks=false,urlcolor=blue]{hyperref}
\usepackage{listings}
\usepackage{graphicx}
\usepackage{enumerate}
\usepackage{tikz}
\usetikzlibrary{patterns}
\usepackage{pdflscape}
\usepackage{afterpage}
\usepackage{multirow}
\usepackage{tabulary}

\topmargin -2cm 
\textheight 24cm
\textwidth 16.0 cm
\oddsidemargin -0.1cm
\pagestyle{plain}
\addtokomafont{section}{\rmfamily}
\addtokomafont{subsection}{\rmfamily}
\addtokomafont{paragraph}{\rmfamily}
\usepackage{indentfirst}

\swapnumbers
\theoremstyle{plain}
\newtheorem{satz}{Theorem}[section]
\newtheorem{lem}[satz]{Lemma}
\newtheorem{kor}[satz]{Corollary}
\newtheorem{prop}[satz]{Proposition}
\theoremstyle{definition}

\newtheorem{alg}[satz]{Algorithm}
\setcounter{secnumdepth}{6}

\newcommand{\R}{\mathbb{R}}
\newcommand{\C}{\mathbb{C}}
\newcommand{\Z}{\mathbb{Z}}
\newcommand{\Ric}{\operatorname{Ric}}
\newcommand{\scal}{\operatorname{scal}}
\newcommand{\vol}{\operatorname{vol}}
\newcommand{\Sym}{\operatorname{Sym}}
\newcommand{\tr}{\operatorname{tr}}
\newcommand{\im}{\operatorname{im}}
\newcommand{\End}{\operatorname{End}}
\newcommand{\Aut}{\operatorname{Aut}}
\newcommand{\Hom}{\operatorname{Hom}}
\newcommand{\Ad}{\operatorname{Ad}}
\newcommand{\ad}{\operatorname{ad}}
\newcommand{\Cas}{\operatorname{Cas}}
\newcommand{\Sy}{\mathscr{S}}
\newcommand{\SU}{\operatorname{SU}}
\newcommand{\su}{\mathfrak{su}}
\renewcommand{\u}{\mathfrak{u}}
\newcommand{\SO}{\operatorname{SO}}
\newcommand{\so}{\mathfrak{so}}
\newcommand{\Sp}{\operatorname{Sp}}
\renewcommand{\sp}{\mathfrak{sp}}
\newcommand{\Spin}{\operatorname{Spin}}
\newcommand{\spann}{\operatorname{span}}
\newcommand{\pr}{\operatorname{pr}}
\newcommand{\m}{\mathfrak{m}}
\newcommand{\h}{\mathfrak{h}}
\newcommand{\g}{\mathfrak{g}}
\renewcommand{\k}{\mathfrak{k}}
\DeclareMathOperator*{\closedsum}{\overline{\bigoplus}}
\newcommand{\TT}{\Sy^2_{\mathrm{tt}}}
\newcommand{\e}{\mathfrak{e}}
\newcommand{\f}{\mathfrak{f}}
\renewcommand{\t}{\mathfrak{t}}
\newcommand{\A}{\mathcal{A}}
\newcommand{\LC}{\nabla}
\newcommand{\CR}{\bar\nabla}
\newcommand{\LL}{\Delta_{\mathrm{L}}}
\newcommand{\Rcr}{\bar R}
\newcommand{\StL}{\bar\Delta}
\newcommand{\PSO}{\operatorname{PSO}}
\newcommand{\LiE}{LiE}
\newcommand{\SI}{\mathcal{S}}
\newcommand{\rmE}{\mathrm{E}}
\newcommand{\rmG}{\mathrm{G}}
\newcommand{\Der}{\operatorname{Der}}
\newcommand{\EC}{E}
\newcommand{\sv}{\mathfrak{v}}
\newcommand{\sw}{\mathfrak{w}}
\newcommand{\V}{\mathcal{V}}
\newcommand{\W}{\mathcal{W}}
\newcommand{\z}{\mathfrak{z}}
\newcommand{\gl}{\mathfrak{gl}}
\newcommand{\Sl}{\mathfrak{S}}
\newcommand{\rk}{\operatorname{rk}}
\newcommand{\p}{\mathfrak{p}}
\newcommand{\Isom}{\operatorname{Iso}}
\newcommand{\isom}{\mathfrak{iso}}
\newcommand{\curvendo}{\mathcal{K}}

\title{\rmfamily The Lichnerowicz Laplacian on normal homogeneous spaces}
\author{Paul Schwahn\footnote{Institut f\"ur Geometrie und Topologie,
 Fachbereich Mathematik, Universit\"at Stuttgart, Pfaffenwaldring 57,
 70569 Stuttgart, Germany.}}
\date{\today}

\begin{document}

\maketitle

\begin{abstract}
\noindent
We give a new formula for the Lichnerowicz Laplacian on normal homogeneous spaces in terms of Casimir operators. We derive some practical estimates and apply them to the known list of non-symmetric, compact, simply connected homogeneous spaces $G/H$ with $G$ simple whose standard metric is Einstein. This yields many new examples of Einstein metrics which are stable in the Einstein--Hilbert sense, which have long been lacking in the positive scalar curvature setting.

\medskip

\noindent{\textit{Mathematics Subject Classification} (2020): 53C24, 53C25, 53C30}

\medskip

\noindent{\textit{Keywords:} Normal homogeneous, Einstein metrics, Stability, Lichnerowicz Laplacian}
\end{abstract}


%

\paragraph*{Code availability.}
The Sage code used to implement to the algorithm in Sec.~\ref{sec:algo} and with which the data in Sec.~\ref{sec:results} was generated is available on GitHub\footnote{\url{https://github.com/PSchwahn/LLBounds}}.

\paragraph*{Acknowledgments.}
The author is indebted to Prof.~G.~Weingart for the spark of inspiration that led to the exact formula for $\LL$ in terms of Casimir operators, to Prof.~E.~Lauret for introducing him to the versatile software system SageMath, and to Prof.~U.~Semmelmann for many enlightening discussions.

\pagebreak

\section{Introduction}
\label{sec:intro}

In 1961 André Lichnerowicz introduced a second order differential operator, known today as the \emph{Lichnerowicz Laplacian} $\LL$, acting on tensor fields on any Riemannian manifold $(M,g)$ \cite{Lic}. It is a generalization of the Hodge--deRham Laplacian on differential forms for which there is a Weitzenböck formula
\[d^\ast d+dd^\ast=\nabla^\ast\nabla+\curvendo(R),\]
where $\curvendo(R)$ is a fibrewise operator depending linearly on the Riemannian curvature $R$. The right hand side makes sense not only for alternating tensors fields (i.e.~differential forms) but for tensor fields of any type and is thus taken as a definition for $\LL$.

Most notably, the Lichnerowicz Laplacian occurs in the stability analysis of Einstein manifolds \cite{B87}. An \emph{Einstein metric} $g$ on $M$ is a Riemannian metric whose Ricci tensor satisfies $\Ric=\EC g$ for some constant $\EC\in\R$, called the \emph{Einstein constant} of $(M,g)$. 

Let $M$ be a compact and oriented Riemannian manifold. The \emph{Einstein--Hilbert functional}, defined as
\[\SI(g)=\int_M\scal_g\vol_g,\]
assigns to each Riemannian metric $g$ on $M$ its total scalar curvature. It is well-known that Einstein metrics on $M$ can be characterized as the critical points of $\SI$ restricted to the ILH manifold\footnote{That is, an (infinite-dimensional) manifold modeled on an \textbf{i}nverse \textbf{l}imit of \textbf{H}ilbert spaces.} of unit volume metrics.

These critical points turn out to always be saddle points. It gets more interesting once we restrict to the manifold $\Sl$ of unit volume metrics with constant scalar curvature -- then an Einstein metric can also be a local maximum, in which case it is called \emph{stable}.

Fix some Einstein metric $g$ on $M$ and consider the linearized problem. Tangent to $\Sl$ lies the space of \emph{tt-tensors} (short for \textbf{t}raceless and \textbf{t}ransverse), denoted $\TT(M)$. The transversality is merely a gauge condition in light of the diffeomorphism invariance of $\SI$. For $h\in\TT(M)$, the second variation of $\SI$ takes the form
\[\SI_g''(h,h)=-\frac{1}{2}\left(\LL h-2\EC h,h\right)_{L^2}.\]
This demonstrates a direct relation between the \emph{linear} stability of $g$ and the spectrum of $\LL$ on $\TT(M)$. It follows from the ellipticity of $\LL$ that $\SI_g''$ has finite coindex and nullity, i.e.~the maximal subspace of $\TT(M)$ on which $\SI_g''\geq0$ is finite-dimensional. Null directions for $\SI_g''$ are the \emph{infinitesimal Einstein deformations} of $g$, that is, those tt-perturbations of $g$ which preserve the Einstein condition to first order.

For the purpose of this article we drop the prefix ``linearly'' and call an Einstein metric \emph{stable} if $\LL>2\EC$ on $\TT(M)$, \emph{semistable} if $\LL\geq2\EC$ on $\TT(M)$, \emph{neutrally stable} if it is semistable and $2\EC$ is in the spectrum of $\LL$ on $\TT(M)$, and \emph{unstable} if $\LL$ has an eigenvalue $\mu<2\EC$ on $\TT(M)$.

In 1980 Koiso published a seminal article which treats the case of Riemannian symmetric spaces \cite{Koiso80}. Irreducible symmetric spaces are isotropy-irreducible, thus they carry only one invariant Riemannian metric up to homothety which, in addition, is Einstein. If $(M,g)$ is a locally symmetric space of noncompact type with no local two-dimensional factors, it is stable thanks to a curvature criterion \cite[Cor.~2.9]{Koiso80}. The case where $(M,g)$ is of compact type required a more extensive analysis which is facilitated by the key fact that $\LL$ coincides with a \emph{Casimir operator}, a representation-theoretic entity whose spectrum is straightforward to compute thanks to the theorem of Peter--Weyl, the Frobenius reciprocity theorem, and a formula of Freudenthal. This enabled Koiso to carry out the stability analysis of irreducible symmetric spaces of compact type, leaving open some gaps that were filled recently \cite{S22,SW22}.

Although the symmetric case is a particularly pleasant one, utilizing a Casimir operator is already possible once we are dealing with an $\Ad$-invariant inner product on some Lie algebra. Thus an appropriate class of spaces to extend this approach to is that of \emph{normal homogeneous spaces}, that is, homogeneous manifolds $M=G/H$ carrying an invariant Riemannian metric which is induced by an $\Ad(G)$-invariant inner product on the Lie algebra $\g$ of $G$. All normal homogeneous Einstein manifolds with $G$ simple are known: they consist of
\begin{enumerate}
 \item irreducible symmetric spaces of compact type, classified by Cartan in 1927.
 \item (non-symmetric) \emph{strongly isotropy irreducible} spaces in the sense that the identity component of the isotropy group $H$ acts irreducibly on the tangent space of $M$, classified by Wolf \cite{Wolf68} in 1968. Here $G$ is necessarily simple. These spaces were independently classified by Manturov \cite{Ma1,Ma2,Ma3} in 1961 and are also contained in a more extensive list of Krämer \cite{Kr75} from 1975.
 \item (non-symmetric) normal homogeneous Einstein manifolds with $G$ simple which are \emph{not strongly isotropy irreducible}, classified by Wang and Ziller \cite{WZ85} in 1985.
\end{enumerate}
The purpose of the present article is to find a suitable description for the Lichnerowicz Laplacian in terms of Casimir operators and initiate the stability analysis of the second and third case. We remark that if we choose $G$ connected such that $G/H$ is simply connected, then $H$ is automatically also connected. We shall thus tacitly assume these properties and speak simply of \emph{isotropy irreducible} spaces.

The third of the above classes has been investigated by E.~Lauret, J.~Lauret and C.~Will in \cite{L1,L2,LL} with regard to a weaker notion of stability, the so-called \emph{$G$-stability}. An invariant Einstein metric on a homogeneous space $G/H$ is called $G$-stable (or $G$-semistable, $G$-neutrally stable, $G$-unstable) if the respective spectral properties of the Lichnerowicz Laplacian hold on the subspace of $G$-invariant tt-tensors. In particular a $G$-unstable metric is also unstable in the classical sense. Restricted to the $G$-invariant setting, the Lichnerowicz Laplacian reduces to a term of order zero ($\frac{1}{2}\A^\ast\A$ in our notation) for which, in the naturally reductive case, a formula in terms of structural constants was developed \cite[Thm.~5.3]{L1}.

For a long time there were no known non-symmetric examples of stable Einstein metrics of \emph{positive scalar curvature} (p.s.c.). This contrasts the fact that negative sectional curvature is sufficient for stability \cite[Cor.~12.73]{B87}, or that all Einstein metrics coming from parallel spinors (which are Ricci-flat) are semistable \cite{DWW05}. On the other hand all known examples of unstable Einstein metrics so far have p.s.c. In \cite{SSW22} the stability of the p.s.c.~standard Einstein metric on the generalized Wallach space $\rmE_7/\PSO(8)$ is proved, after its $G$-stability was already shown in \cite{L2}, yielding the first known example a stable p.s.c.~Einstein metric. The result follows from the discussion of the zeroth order curvature term $\curvendo(R)$ and already utilizes Casimir operators in a crucial way.

Our aim is to treat the full second order operator $\LL$ instead. We lay out the necessary preliminaries in Sec.~\ref{sec:prelim} and develop an exact formula for $\LL$ in terms of Casimir operators in Sec.~\ref{sec:estimates}, from which two useful estimates follow. After a short digression on how to compute the relevant Casimir eigenvalues in Sec.~\ref{sec:casimir}, we give in Sec.~\ref{sec:algo} an explicit algorithm employing the new estimates in order to find lower bounds on $\LL$ on individual Fourier modes (see Sec.~\ref{sec:prelimharm} for a clarification of this term) and single out potential sources of instability. This algorithm is then applied, case-by-case, to the lists of Wolf and Wang--Ziller of compact, simply connected standard homogeneous Einstein manifolds, all of which have nonnegative sectional curvature.

In order to carry out the necessary calculations, computer assistance has been indispensable. We implemented our algorithm in the software system SageMath \cite{Sage} and heavily relied on its interface to the computer algebra package \LiE\ \cite{LiE}. Both systems are open source.

By the nature of our approach we were only able to reap the rewards of Alg.~\ref{thisalg} on a finite number of spaces. It also remains unclear in many cases whether the found potentially destabilizing Fourier modes actually contain destabilizing tt-tensors. Although our results are only partial, they produce a lot of stable examples; to be concrete,
\begin{itemize}
 \item 51 members of the isotropy irreducible families I, II, III, VII and IX (see Tables~\ref{famii} and \ref{resultsiifam}),
 \item 22 members of the isotropy reducible families XV, XVI and XVIIa (see Tables~\ref{famnii} and \ref{resultsniifam2}), the latter being the full flag manifolds $\SO(2n)/T^n$,
 \item 18 isotropy irreducible and 16 isotropy reducible exceptional spaces (see Tables~\ref{resultsiiex1}, \ref{resultsiiex2} and \ref{resultsniiex}),
\end{itemize}
totalling 107 spaces. The results are listed and discussed in detail in Sec.~\ref{sec:results}. Overall we are led to the conclusion that stable p.s.c.~Einstein metrics are not as scarce as previously believed.

\section{Preliminaries}
\label{sec:prelim}

\subsection{The Lichnerowicz Laplacian}
\label{sec:prelimll}

We begin with a compact, oriented Riemannian manifold $(M,g)$. A \emph{tensor bundle} over $M$ is a $\SO(TM)$-invariant subbundle of some tensor power of $TM$, or more abstractly, any vector bundle $VM$ associated to the frame bundle of $(M,g)$ via some representation of $\SO(n)$. On any such bundle, the \emph{standard curvature endomorphism} $\curvendo(R)$ of the Riemannian curvature $R$ is defined by
\[\curvendo(R)=\sum_{i<j}(e_i\wedge e_j)_\ast R(e_i,e_j)_\ast\in\End VM,\]
where $(e_i)$ is a local orthonormal frame of $TM$ and $A_\ast$ denotes the natural action of some $A\in\gl(T)$ on tensors as a derivation. For the sake of notational clarity we will also write $\Der_A$ instead whenever appropriate.

Note that on $TM$ itself $\curvendo(R)$ coincides with the Ricci endomorphism, i.e.
\[g(\curvendo(R)X,Y)=\Ric(X,Y).\]

Let $\LC$ denote the Levi-Civita connection of $g$, as well as the connection induced on the tensor bundle $VM$. The \emph{Lichnerowicz Laplacian} is the self-adjoint elliptic operator defined by
\[\LL=\LC^\ast\LC+\curvendo(R)\]
on sections of $VM$. It is an instance of the \emph{standard Laplace operator} on geometric vector bundles \cite{SW19}. As for any Laplace-type operator, $\LL$ has discrete spectrum accumulating only at positive infinity. The Lichnerowicz Laplacian generalizes the Hodge-deRham Laplacian in the sense that $\LL=d^\ast d+dd^\ast$ on $\Omega^p(M)$.

A tensor bundle of particular importance is $\Sym^pT^\ast M$, the bundle of covariant symmetric $p$-tensors. Its space of smooth sections will be denoted by $\Sy^p(M)$. Let $\delta$ denote the (metric) \emph{divergence} operator defined by
\[\delta:\ \Sy^{p+1}(M)\to\Sy^p(M):\quad \delta h=-\sum_ie_i\lrcorner\LC_{e_i}h.\]
Symmetric $2$-tensors $h$ that are divergence-free ($\delta h=0$, also \emph{transverse}) and trace-free ($\tr_gh=0$) are called \emph{tt-tensors}. As explained in the introduction, the space $\TT(M)$ of tt-tensors is the central stage for the stability analysis of an Einstein metric. There is the estimate
\begin{equation}
\LL\geq2\curvendo(R)\qquad\text{on }\TT(M),\label{LLqR}
\end{equation}
cf.~\cite[Prop.~6.2]{HMS16}. A sufficient criterion for stability is thus the condition $\curvendo(R)>E$ on trace-free symmetric $2$-tensors, which will serve as an important shortcut in some cases. It provides the striking advantage of only having to analyze a fibrewise term instead of a second order differential operator.

\subsection{Normal homogeneous spaces}
\label{sec:prelimhom}

Let $M=G/H$ be a reductive homogeneous space and let $\g=\h\oplus\m$ be a reductive (i.e.~$\Ad(H)$-invariant) decomposition, where $\g$ and $\h$ denote the Lie algebras of $G$ and $H$, respectively. As usual, $\m$ is identified with the tangent space of $M$ at the base point and called the \emph{isotropy representation} of $H$. There is then a one-to-one correspondence between $H$-invariant inner products on $\m$ and $G$-invariant Riemannian metrics on $M$. Without restriction we may assume $G$ to act almost effectively -- equivalently, the isotropy representation of $\h$ is faithful.

Such an invariant metric is called \emph{normal} if it is induced by the restriction $Q\big|_\m$, where $Q$ is some $\Ad(G)$-invariant inner product on $\g$. If $G$ is compact and semisimple, there is the canonical choice $Q=-B_\g$, where $B_\g$ is the (negative-definite) Killing form of $\g$. This particular metric is called the \emph{standard metric} on $M$. If $G$ is simple, then clearly every normal metric is homothetic to the standard metric.

Let $\A$ be the $G$-invariant $(2,1)$-tensor field on $M$ defined by $\A_XY=\ad_\m(X)Y=\pr_\m[X,Y]$ for $X,Y\in\m$. If $(M,g)$ is normal homogeneous, then it is also naturally reductive -- equivalently, $\A$ is totally skew-symmetric. The tensor field $\A$ can be thought of as measuring the failure of $(M,g)$ to be locally symmetric since the vanishing of $\A$ is equivalent to the third Cartan relation $[\m,\m]\subset\h$.

Let further $\CR$ denote the \emph{canonical reductive} (or \emph{Ambrose--Singer}) connection on $M$. This $G$-invariant connection has the distinctive property that it leaves every $G$-invariant tensor field parallel. In particular $\CR$ is a metric connection. It is, however, not torsion-free; notably, its torsion tensor is given by $-\A$.

For any $X\in\m$, consider the endomorphism $\A_X=\ad_\m(X)\in\so(\m)$ and extend it to tensors of valence $p$ as a derivation $\Der_{\A_X}=(\A_X)_\ast=\ad_\m^{\otimes p}(X)$. Given some tensor bundle $VM$, this defines a $\CR$-parallel bundle map
\[\A:\ VM\to T^\ast M\otimes VM:\quad v\mapsto\sum_ie^i\otimes(\A_{e_i})_\ast v\]
with metric adjoint
\[\A^\ast:\ T^\ast M\otimes VM\to VM:\quad \alpha\otimes v\mapsto-\sum_i\alpha(e_i)(\A_{e_i})_\ast v,\]
where $(e_i)$ again denotes a local orthonormal frame of $TM$. The Levi-Civita connection $\LC$ of a normal metric $g$ can then be expressed in terms of $\CR$ and $\A$ as
\begin{equation}
\LC=\CR+\frac{1}{2}\A.\label{lccr}
\end{equation}

\subsection{Casimir operators}
\label{sec:prelimcas}

Consider a real Lie algebra $\g$ equipped with an invariant inner product $Q$. Given a representation $\rho_\ast: \g\to\End V$, the \emph{Casimir operator} is a $\g$-equivariant endomorphism of $V$ defined by
\[\Cas^{\g,Q}_V:=-\sum_i\rho_\ast(e_i)^2.\]
On an irreducible module, the Casimir operator acts as multiplication with a constant as a consequence of Schur's Lemma, henceforth called the \emph{Casimir constant}. For compact semisimple $\g$, the Casimir constant of an irreducible $\g$-module $V$ with highest weight $\lambda\in\t^\ast$, where $\t\subset\g$ is a suitably chosen maximal abelian subalgebra, is given by Freudenthal's formula:
\begin{equation}
\Cas^{\g,Q}_\lambda=Q^\ast(\lambda,\lambda+2\delta_\g),\label{freudenthal}
\end{equation}
where $\delta_\g$ is the half-sum of positive roots and $Q^\ast$ is the inner product on $\t^\ast$ dual to $Q\big|_\t$.

On the other hand, if $\g$ is abelian, the Casimir constant on the weight space defined by the weight $\lambda\in\g^\ast$ is simply given by the squared length of the weight, i.e.
\begin{equation}
\Cas^{\g,Q}_\lambda=Q^\ast(\lambda,\lambda).\label{castorus}
\end{equation}

Two issues arise in practice when the Casimir constants are to be computed: first, how to find and represent the highest weights; second, how to find the appropriate inner product on the weight lattice, especially when $\g$ is not simple. Section~\ref{sec:casimir} is devoted to handling these problems.

For our purposes, it suffices to express the weights of a semisimple Lie algebra of rank $r$ in the basis of \emph{fundamental weights} $(\omega_i)_{i=1}^r$, such that each dominant integral weight $\lambda$ can be written as
\[\lambda=\sum_{i=1}^ra_i\omega_i,\qquad a_i\in\Z_{\geq0}\]
(also called \emph{coroot style notation}). We use Bourbaki's convention for the ordering of fundamental weights of a simple Lie algebra, as do \LiE\ and Sage.

Throughout what follows we will omit the superscript $Q$ in $\Cas^{\g,Q}_V$ if the inner product is clear from context. If Casimir operators of both $\g$ and a subalgebra $\h$ are present, the implied inner product on $\h$ shall be the restriction $Q\big|_\h$ unless otherwise stated. If $\g$ is compact and $Q=-B_\g$ is the standard inner product, the Casimir operator on the adjoint representation is the identity, that is
\begin{equation}
\Cas^{\g,-B_\g}_\g=1,\label{casad}
\end{equation}
which may serve as a normalization condition to find the ``right'' inner product on the weight lattice.

We remark that the Einstein condition for a standard homogeneous space is itself encoded in a Casimir operator -- namely, the standard metric on a compact homogeneous space $G/H$ is Einstein if and only if the Casimir operator of the isotropy representation $\Cas^\h_\m$ has only one eigenvalue. If this is the case, the eigenvalue is $2\EC-\frac{1}{2}$ where $\EC$ is the Einstein constant \cite[Prop.~7.89,~7.92]{B87}, cf.~\cite[Thm.~1]{WZ85}.

\subsection{Harmonic analysis on homogeneous spaces}
\label{sec:prelimharm}

Let $M=G/H$ be a homogeneous space and $\rho: H\to\Aut V$ a finite-dimensional representation. We denote with $VM=G\times_\rho V$ the associated vector bundle over $M$ with fiber $V$. Its sections are identified with $H$-equivariant $V$-valued functions on $G$, i.e.
\[\Gamma(VM)\stackrel{\cong}{\longrightarrow} C^\infty(G,V)^H:\ s\mapsto\hat s,\qquad\text{where}\quad s(xH)=[x,\hat s(x)]\in G\times_\rho V.\]
This space is an infinite-dimensional $G$-module via the \emph{left-regular representation}
\[\ell:\ G\to\Aut C^\infty(G,V)^H:\ (\ell(x)f)(y)=f(x^{-1}y),\qquad x,y\in G.\]
Every tensor bundle on $M$ can be understood as associated to a suitable tensor power of the isotropy represention $\m$ of $M$, for example $\Sym^pT^\ast M\cong G\times_\rho\Sym^p\m^\ast$.

The canonical reductive connection $\CR$, acting as covariant derivative on sections of a tensor bundle $\Gamma(VM)$, translates simply into the directional derivative on $C^\infty(G,V)^H$, i.e.
\[\widehat{\CR_Xs}=X(\hat s)=-\ell_\ast(X)\hat s,\qquad X\in\m.\]

Suppose $G$ is compact and denote with $\hat G$ the set of equivalence classes of finite-dimensional irreducible complex $G$-modules. Each such module $V_\gamma$ is (up to equivalence) uniquely determined by its highest weight $\gamma$. The set $\hat G$ is thus parametrized by the dominant integral weights of $G$, after the necessary choices have been made.

If $V$ is a unitary $H$-module, then an irreducible decomposition of the left-regular representation on sections of $VM$ is given by a consequence of the classical Peter--Weyl theorem and Frobenius reciprocity, also known as the Peter--Weyl theorem for homogeneous vector bundles \cite[Thm.~5.3.6]{Wa}. It states that
\begin{equation}
L^2(G,V)^H\cong\closedsum_{\gamma\in\hat G}V_\gamma\otimes\Hom_H(V_\gamma,V).\label{peterweyl}
\end{equation}
For each \emph{Fourier mode} $\gamma\in\hat G$ we call $\Hom_H(V_\gamma,V)=(V_\gamma^\ast\otimes V)^H$ the space of \emph{Fourier coefficients}. Given $v\in V_\gamma$ and $F\in\Hom_H(V_\gamma,V)$, the equivariant (smooth) function corresponding to $v\otimes F$ is given by $x\mapsto F(x^{-1}v)$.

Any $G$-invariant differential operator $D: \Gamma(VM)\to\Gamma(WM)$ between such vector bundles can be analyzed in the Fourier image where it consists of a discrete family $(D\big|_\gamma)_{\gamma\in\hat G}$ of linear operators
\[D\big|_\gamma:\ \Hom_H(V_\gamma,V)\longrightarrow\Hom_H(V_\gamma,W).\]
One important invariant differential operator is the standard Laplacian of the connection $\CR$, defined by
\[\StL=\CR^\ast\CR+\curvendo(\Rcr).\]
A key observation is that on normal homogeneous spaces this operator coincides with the Casimir operator of the left-regular representation \cite[Lem.~5.2]{MS10}, that is
\begin{equation}
\StL=\Cas^\g_\ell\label{stcas}
\end{equation}
(so that $\StL\big|_\gamma$ is just multiplication by the constant $\Cas^\g_\gamma$). We remark that if the underlying space is symmetric, i.e. $\A=0$, then $\CR$ coincides with the Levi-Civita connection $\LC$ and thus $\StL$ with the Lichnerowicz Laplacian $\LL$, a fact that has been of vital importance for the foundational work of Koiso \cite{Koiso80} on the stability of symmetric spaces. Our aim is to give a similarly satisfying formula for $\LL$ also in the case $\A\neq0$.

\section{Formulas and estimates for the Lichnerowicz Laplacian}
\label{sec:estimates}

Let $M=G/H$ be a homogeneous space, where $G$ is a compact Lie group, equipped with a normal Riemannian metric $g$. Let $\g$ and $\h$ denote the Lie algebras of $G$ and $H$, respectively. We begin with a description of the Lichnerowicz Laplacian on symmetric tensor fields in terms of the reductive connection $\CR$ and the tensor field $\A$.

\begin{lem}
\label{ll}
On $\Sy^p(M)$, $\LL=\StL+\A^\ast\CR+\frac{1}{2}\A^\ast\A$.
\end{lem}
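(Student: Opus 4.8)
The plan is to start from the decomposition $\LC = \CR + \frac{1}{2}\A$ given in \eqref{lccr} and compute the connection Laplacian $\LC^\ast\LC$ in terms of $\CR$, $\A$, and their adjoints, then combine with the curvature endomorphism terms. The key structural fact I would exploit is that $\StL = \CR^\ast\CR + \curvendo(\Rcr)$ is the Laplacian of the reductive connection, so the difference $\LL - \StL$ should collect into first- and zeroth-order correction terms built from $\A$. Concretely, writing $\LC_X = \CR_X + \frac{1}{2}\A_X$ and taking the rough Laplacian via $\LC^\ast\LC = -\sum_i \LC_{e_i}\LC_{e_i} + \LC_{\LC_{e_i}e_i}$ (or the cleaner frame-independent expression), I would expand each factor and group the resulting expression into a term that reproduces $\CR^\ast\CR$, the cross terms $\CR^\ast(\tfrac12\A) + (\tfrac12\A)^\ast\CR$ type contributions, and the quadratic term $\tfrac14\A^\ast\A$-type contributions.

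First I would establish the algebraic identities needed to identify the cross terms with $\A^\ast\CR$. The bundle map $\A\colon VM \to T^\ast M \otimes VM$ and its adjoint $\A^\ast$ are defined in Sec.~\ref{sec:prelimhom}, and since $\A$ is $\CR$-parallel, the operators $\CR$ and $\A$ commute in a controlled way; this parallelism is exactly what lets me move $\A$ past $\CR$ without generating extra derivative-of-$\A$ terms. The total skew-symmetry of $\A$ (naturally reductive case) is the crucial input: it forces several cross terms to vanish or combine, and I expect it to reconcile the naive coefficient $\tfrac14$ from squaring $\tfrac12\A$ with the stated coefficient $\tfrac12$ in $\frac{1}{2}\A^\ast\A$. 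I would also need to track how the curvature endomorphisms differ: $\curvendo(R)$ for the Levi-Civita curvature versus $\curvendo(\Rcr)$ for the reductive curvature. The difference $R - \Rcr$ is expressible through $\A$ via the standard formula relating the curvatures of two metric connections differing by a (skew) tensor, so $\curvendo(R) - \curvendo(\Rcr)$ should contribute a zeroth-order $\A$-quadratic piece that merges with the $\A^\ast\A$ terms.

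The main obstacle I anticipate is the careful bookkeeping of the zeroth-order terms: both the expansion of $\LC^\ast\LC$ and the curvature difference $\curvendo(R) - \curvendo(\Rcr)$ produce terms quadratic in $\A$, and I must verify that they assemble precisely into $\tfrac12\A^\ast\A$ with no leftover. This requires using the specific structure on $\Sy^p(M)$, where the derivation action $(\A_X)_\ast$ acts on symmetric $p$-tensors, together with the Bianchi-type and Jacobi identities for $\ad_\m$ that encode the torsion $-\A$ of $\CR$. I would verify the coefficient by a local orthonormal frame computation, using that $\A$ is skew so that $\sum_i (\A_{e_i})_\ast(\A_{e_i})_\ast$ and the corresponding adjoint expression differ by a sign that produces the factor needed. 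The restriction to symmetric tensors (rather than general tensors) is what I expect to make the identity clean, since the symmetrization removes the alternating contributions that would otherwise obstruct the collapse into $\A^\ast\A$.

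Once the first-order cross term is identified as $\A^\ast\CR$ and the zeroth-order remainder as $\tfrac12\A^\ast\A$, the claimed identity $\LL = \StL + \A^\ast\CR + \frac{1}{2}\A^\ast\A$ follows by comparing $\LL = \LC^\ast\LC + \curvendo(R)$ with $\StL = \CR^\ast\CR + \curvendo(\Rcr)$ term by term. I would present the computation at the level of the equivariant functions in $C^\infty(G,V)^H$, where by the dictionary $\widehat{\CR_X s} = X(\hat s)$ the operators become concrete differential operators on $G$, simplifying the verification of the commutation and adjointness relations.
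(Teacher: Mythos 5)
Your proposal follows essentially the same route as the paper's proof: expand $\LC^\ast\LC$ via $\LC=\CR+\tfrac{1}{2}\A$, use the $\CR$-parallelism of $\A$ to identify the cross terms with $\A^\ast\CR$, and add the curvature comparison so that the two quadratic contributions of $\tfrac{1}{4}\A^\ast\A$ sum to $\tfrac{1}{2}\A^\ast\A$. The only difference is that the zeroth-order verification you flag as the main obstacle --- that $\curvendo(R)-\curvendo(\Rcr)=\tfrac{1}{4}\A^\ast\A$ on symmetric tensors --- is not recomputed in the paper but simply cited from \cite[Cor.~3.2]{SSW22}.
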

\begin{proof}
By definition, $\LL=\LC^\ast\LC+\curvendo(R)$ and $\StL=\CR^\ast\CR+\curvendo(\Rcr)$. We first compare the two rough Laplacians. Noting that $\CR^\ast\A=\A^\ast\CR$ since $\A$ is $\CR$-parallel, it follows from \eqref{lccr} that
\[\LC^\ast\LC=\CR^\ast\CR+\A^\ast\CR+\frac{1}{4}\A^\ast\A.\]
Combining this with \cite[Cor.~3.2]{SSW22}, which states that $\curvendo(R)=\curvendo(\Rcr)+\frac{1}{4}\A^\ast\A$ on symmetric tensors of any valence, we obtain the desired relation.
\end{proof}

We recognize the standard Laplace operator $\StL$ of the reductive connection, which is nothing but the Casimir operator on the left-regular representation by \eqref{stcas}. Our goal is to obtain an expression of $\LL$ purely in terms of Casimir operators so that the calculation of its spectrum reduces to a representation-theoretic problem as in the symmetric case. Fortunately, this turns out to be possible. First, we need to recall an earlier result about the zeroth order term in the formula of Lemma~\ref{ll}.

\begin{lem}[\cite{SSW22}, Lem.~3.3]
\label{aacas1}
On $\m^{\otimes p}$,
\[\A^\ast\A=\pr_{\m^{\otimes p}}\Cas^\g_{\g^{\otimes p}}-\Cas^\h_{\m^{\otimes p}}-\Der_{\Cas^\h_\m}.\]
\end{lem}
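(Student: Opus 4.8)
The plan is to expand both sides into sums of squared derivations over an orthonormal basis of $\g$ adapted to the splitting $\g=\h\oplus\m$, and then to match terms after projecting to $\m^{\otimes p}$. First I would unwind the definitions of $\A$ and $\A^\ast$ from Section~\ref{sec:prelimhom} to record that on $\m^{\otimes p}$ one has $\A^\ast\A=-\sum_i\Der_{\A_{e_i}}^2$, where $(e_i)$ is a $Q$-orthonormal basis of $\m$ and $\A_{e_i}=\ad_\m(e_i)=\pr_\m\ad(e_i)|_\m$. Likewise I would write $\Cas^\g_{\g^{\otimes p}}=-\sum_a\Der_{\ad(E_a)}^2$ for a $Q$-orthonormal basis $(E_a)$ of $\g$, and split it as $(E_a)=(e_i)\cup(f_\alpha)$ with $(f_\alpha)$ a basis of $\h$, using that $\m=\h^\perp$ for a normal metric.

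The $\h$-part is the easy half: since $[\h,\m]\subset\m$, each $\ad(f_\alpha)$ preserves $\m$, so $\Der_{\ad(f_\alpha)}$ preserves $\m^{\otimes p}$ and the projection acts trivially. Hence $-\sum_\alpha\pr_{\m^{\otimes p}}\Der_{\ad(f_\alpha)}^2=-\sum_\alpha\Der_{\ad_\m(f_\alpha)}^2=\Cas^\h_{\m^{\otimes p}}$, which accounts for that summand on the right-hand side.

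The heart of the argument is the $\m$-part $-\sum_i\pr_{\m^{\otimes p}}\Der_{\ad(e_i)}^2$. The key structural input is that $\ad(e_i)$ maps $\m\to\g=\m\oplus\h$ but $\h\to\m$ (again by $[\m,\h]\subset\m$), so in block form relative to $\g=\m\oplus\h$ the operator $\ad(e_i)$ has vanishing $\h\to\h$ block, diagonal block $\A_{e_i}$ on $\m$, and off-diagonal blocks $B_i:=\pr_\h\ad(e_i)|_\m:\m\to\h$ together with $\ad(e_i)|_\h=-B_i^\ast:\h\to\m$, the sign coming from skew-symmetry of $\ad(e_i)$ under $Q$. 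Applying $\Der_{\ad(e_i)}$ to a tensor in $\m^{\otimes p}$ produces an $\m^{\otimes p}$-part (every slot hit by $\A_{e_i}$) plus terms carrying exactly one $\h$-slot (one slot hit by $B_i$). The main obstacle, where careful slot-bookkeeping is needed, is the second application of $\Der_{\ad(e_i)}$: I must track precisely which terms fall back into $\m^{\otimes p}$. The diagonal contribution gives $\Der_{\A_{e_i}}^2$, and the only other way to return to $\m^{\otimes p}$ is to hit the single $\h$-slot with the $\h\to\m$ block $-B_i^\ast$, yielding $-\Der_{B_i^\ast B_i}$; every remaining term retains at least one $\h$-slot and dies under $\pr_{\m^{\otimes p}}$. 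This gives $\pr_{\m^{\otimes p}}\Der_{\ad(e_i)}^2=\Der_{\A_{e_i}}^2-\Der_{B_i^\ast B_i}$ on $\m^{\otimes p}$.

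Summing over $i$ then assembles $\pr_{\m^{\otimes p}}\Cas^\g_{\g^{\otimes p}}=\A^\ast\A+\Cas^\h_{\m^{\otimes p}}+\sum_i\Der_{B_i^\ast B_i}$, so it remains to identify the correction $\sum_iB_i^\ast B_i$ with $\Cas^\h_\m$ as endomorphisms of $\m$. I would do this by a basis-swapping computation: expanding $Q(\sum_iB_i^\ast B_iY,Y')=\sum_{i,\alpha}Q([e_i,Y],f_\alpha)\,Q([e_i,Y'],f_\alpha)$ and using $\Ad$-invariance of $Q$ to rewrite $Q([e_i,Y],f_\alpha)=-Q(e_i,\ad_\m(f_\alpha)Y)$, the sum over $i$ collapses by completeness of $(e_i)$ to $\sum_\alpha Q(\ad_\m(f_\alpha)Y,\ad_\m(f_\alpha)Y')=Q(\Cas^\h_\m Y,Y')$. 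Hence $\sum_i\Der_{B_i^\ast B_i}=\Der_{\Cas^\h_\m}$, and rearranging the displayed identity yields the claim. I expect the slot-bookkeeping in the $\m$-part and this invariance-based collapse of $\sum_iB_i^\ast B_i$ to be the two steps demanding the most care.
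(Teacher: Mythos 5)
Your proof is correct. The paper itself does not prove this lemma---it imports it verbatim from \cite{SSW22}, Lem.~3.3---so there is no in-paper argument to compare against; your blind reconstruction (splitting a $Q$-orthonormal basis of $\g$ along $\g=\h\oplus\m$, the block identity $\ad(e_i)\big|_\h=-B_i^\ast$ forced by invariance of $Q$, the slot-bookkeeping yielding $\pr_{\m^{\otimes p}}\Der_{\ad(e_i)}^2=\Der_{\A_{e_i}}^2-\Der_{B_i^\ast B_i}$ on $\m^{\otimes p}$, and the collapse $\sum_iB_i^\ast B_i=\Cas^\h_\m$ via completeness of $(e_i)$) is exactly the natural direct verification, and every step checks out.
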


Recall that $\Cas^\h_\m$ simply acts as multiplication with the constant $c=2\EC-\frac{1}{2}$ if $(M,g)$ is Einstein with Einstein constant $\EC$. Extending this as a derivation to the $p$-fold tensor power results in multiplication with $pc$. This simplifies the formula in Lemma~\ref{aacas1}.

\begin{kor}
\label{aacas2}
If $(M,g)$ is Einstein, then on $\m^{\otimes p}$,
\[\A^\ast\A=\pr_{\m^{\otimes p}}\Cas^\g_{\g^{\otimes p}}-\Cas^\h_{\m^{\otimes p}}-2p\EC+\frac{p}{2}.\]
\end{kor}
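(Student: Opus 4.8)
The plan is to obtain the corollary as a direct specialization of Lemma~\ref{aacas1}, whose only term requiring simplification under the Einstein hypothesis is the derivation $\Der_{\Cas^\h_\m}$. Everything else in the formula of Lemma~\ref{aacas1} is already in the desired shape, so I would leave $\pr_{\m^{\otimes p}}\Cas^\g_{\g^{\otimes p}}$ and $\Cas^\h_{\m^{\otimes p}}$ untouched and focus exclusively on the last summand.

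First I would invoke the characterization of the standard Einstein condition recalled in Section~\ref{sec:prelimcas}: since $(M,g)$ is Einstein with Einstein constant $\EC$, the isotropy Casimir $\Cas^\h_\m$ has a single eigenvalue $c=2\EC-\tfrac{1}{2}$, so that $\Cas^\h_\m=c\cdot\Id_\m$ as an endomorphism of $\m$. Next I would compute the derivation extension of this scalar endomorphism. For arbitrary $A\in\End(\m)$ the operator $\Der_A$ acts on decomposable tensors by $\Der_A(v_1\otimes\cdots\otimes v_p)=\sum_{i=1}^p v_1\otimes\cdots\otimes Av_i\otimes\cdots\otimes v_p$; specializing to $A=c\cdot\Id_\m$, each of the $p$ summands contributes a single factor $c$, whence $\Der_{\Cas^\h_\m}=pc\cdot\Id_{\m^{\otimes p}}=\left(2p\EC-\tfrac{p}{2}\right)\Id$. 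Substituting this scalar into the formula of Lemma~\ref{aacas1} then produces the claimed identity.

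There is no genuine obstacle here: the entire content is the elementary observation that the derivation extension of a multiple of the identity on a $p$-fold tensor power acts as $p$ times that multiple. The only point worth stating explicitly is that the Einstein condition is precisely what collapses $\Cas^\h_\m$ to a scalar, so that its derivation becomes a scalar as well; without that hypothesis one must keep the operator $\Der_{\Cas^\h_\m}$ intact, exactly as in Lemma~\ref{aacas1}.
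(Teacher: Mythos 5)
Your proposal is correct and follows exactly the paper's own route: the paper likewise notes that the Einstein condition makes $\Cas^\h_\m$ act as the scalar $c=2\EC-\tfrac{1}{2}$, that its derivation extension to $\m^{\otimes p}$ is multiplication by $pc$, and then substitutes this into Lemma~\ref{aacas1}. Nothing is missing; the sign bookkeeping ($-pc=-2p\EC+\tfrac{p}{2}$) also matches the stated formula.
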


We turn now to a description of the first order differential operator $\A^\ast\CR$. By means of the inclusion $\m^{\otimes p}\subset\g^{\otimes p}$ and forgetting the $H$-invariance we can consider $C^\infty(G,\m^{\otimes p})^H$ as a subspace of the $G$-module $C^\infty(G,\g^{\otimes p})\cong C^\infty(G)\otimes\g^{\otimes p}$. On the level of Fourier coefficients this corresponds to the inclusion $(V_\gamma^\ast\otimes\m^{\otimes p})^H\subset V_\gamma^\ast\otimes\g^{\otimes p}$. Suggestively denoting the representation of $G$ on $C^\infty(G,\g^{\otimes p})$ by $\ell\otimes\Ad^{\otimes p}$, it becomes possible to write the first order term $\A^\ast\CR$ in terms of Casimir operators.

\begin{lem}
\label{1cas}
On $C^\infty(G,\m^{\otimes p})^H$,
\[\A^\ast\CR=\frac{1}{2}\Cas^\g_\ell+\frac{1}{2}\pr_{\m^{\otimes p}}(\Cas^\g_{\g^{\otimes p}}-\Cas^\g_{\ell\otimes\Ad^{\otimes p}})-\Cas^\h_{\m^{\otimes p}}.\]
\end{lem}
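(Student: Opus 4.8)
The plan is to unwind $\A^\ast\CR$ in the harmonic-analysis picture of Sec.~\ref{sec:prelimharm} and recognize the outcome as a polarization of Casimir operators. Fix a $Q$-orthonormal basis $(e_i)$ of $\m$ and extend it by a $Q$-orthonormal basis $(f_a)$ of $\h$ to an orthonormal basis $(E_A)$ of $\g$. From the definitions of $\A$ and $\A^\ast$ one has $\A^\ast\CR s=-\sum_i\Der_{\A_{e_i}}\CR_{e_i}s$, so under the identification $\Gamma(VM)\cong C^\infty(G,\m^{\otimes p})^H$ and the rule $\widehat{\CR_{e_i}s}=-\ell_\ast(e_i)\hat s$ the two minus signs cancel and the operator becomes $\widehat{\A^\ast\CR s}=\sum_i\ad_\m^{\otimes p}(e_i)\,\ell_\ast(e_i)\hat s$. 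The fibrewise derivation $\ad_\m^{\otimes p}(e_i)$ and the function-level operator $\ell_\ast(e_i)$ act on different factors and hence commute. Since every tensor factor of an element of $\m^{\otimes p}$ already lies in $\m$, projecting the ambient derivation $\ad^{\otimes p}(e_i)$ of $\g^{\otimes p}$ back onto $\m^{\otimes p}$ recovers $\ad_\m^{\otimes p}(e_i)$; as $\pr_{\m^{\otimes p}}$ is fibrewise it commutes with $\ell_\ast(e_i)$, so $\widehat{\A^\ast\CR s}=\pr_{\m^{\otimes p}}\sum_i\ell_\ast(e_i)\ad^{\otimes p}(e_i)\hat s$, now read inside $C^\infty(G,\g^{\otimes p})$.

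The key step is a polarization identity. Expanding the Casimir of the tensor product representation $\ell\otimes\Ad^{\otimes p}$ on $C^\infty(G,\g^{\otimes p})$, whose two commuting factors contribute the squares plus twice the cross term, gives
\[\Cas^\g_{\ell\otimes\Ad^{\otimes p}}=\Cas^\g_\ell+\Cas^\g_{\g^{\otimes p}}-2\sum_A\ell_\ast(E_A)\ad^{\otimes p}(E_A),\]
so that $\sum_A\ell_\ast(E_A)\ad^{\otimes p}(E_A)=\tfrac12\bigl(\Cas^\g_\ell+\Cas^\g_{\g^{\otimes p}}-\Cas^\g_{\ell\otimes\Ad^{\otimes p}}\bigr)$. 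Splitting off the $\m$-part, I would write $\sum_i\ell_\ast(e_i)\ad^{\otimes p}(e_i)=\sum_A\ell_\ast(E_A)\ad^{\otimes p}(E_A)-\sum_a\ell_\ast(f_a)\ad^{\otimes p}(f_a)$, reducing everything to the three Casimirs above together with the remaining $\h$-sum.

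It remains to evaluate the $\h$-sum. On equivariant functions the $H$-equivariance of $\hat s$ trades the regular derivative in an $\h$-direction for the fibrewise isotropy action, $\ell_\ast(f_a)\hat s=-\rho_\ast(f_a)\hat s$ with $\rho_\ast(f_a)=\ad_\m^{\otimes p}(f_a)$; and since $\ad(f_a)$ preserves $\m$ we may replace $\ad^{\otimes p}(f_a)$ by $\rho_\ast(f_a)$ on $\m^{\otimes p}$. Hence $\sum_a\ell_\ast(f_a)\ad^{\otimes p}(f_a)\hat s=-\sum_a\rho_\ast(f_a)^2\hat s=\Cas^\h_{\m^{\otimes p}}\hat s$. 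Substituting and noting that $\Cas^\g_\ell$ (purely function-level) and $\Cas^\h_{\m^{\otimes p}}$ (which preserves $\m^{\otimes p}$) commute with $\pr_{\m^{\otimes p}}$ and thus leave the projection, whereas $\Cas^\g_{\g^{\otimes p}}$ and $\Cas^\g_{\ell\otimes\Ad^{\otimes p}}$ genuinely involve the ambient adjoint action and stay inside it, assembles exactly into the claimed formula.

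I expect the main obstacle to be the disciplined placement of $\pr_{\m^{\otimes p}}$ together with the attendant sign bookkeeping. The ambient derivation $\ad^{\otimes p}(e_i)$ for $e_i\in\m$ does not preserve $\m^{\otimes p}$ -- this is precisely why the projection must be retained on the two fibre-involving Casimirs but not on $\Cas^\g_\ell$ -- and one must check that the polarization identity, which lives on the full space $C^\infty(G,\g^{\otimes p})$, remains valid after restriction to $\m^{\otimes p}$-valued $H$-equivariant functions and after applying $\pr_{\m^{\otimes p}}$. Getting the signs in the two translation rules (for $\CR$ along $\m$, and for the $\h$-directions via equivariance) mutually consistent is the only delicate point; once fixed as above, the three Casimirs and the isotropy term fall into place.
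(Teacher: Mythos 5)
Your proof is correct and is essentially the paper's own argument: both identify $\A^\ast\CR$ with the cross term arising from expanding the square in $\Cas^\g_{\ell\otimes\Ad^{\otimes p}}$ (using that function-level and fibrewise operators commute), and both use the infinitesimal $H$-equivariance to convert $\ell_\ast$ in $\h$-directions into the fibrewise isotropy action, yielding $\Cas^\h_{\m^{\otimes p}}$. The only cosmetic difference is bookkeeping order -- the paper invokes equivariance at the outset to reduce the Casimir sum to an $\m$-basis and then expands, whereas you polarize over a full basis of $\g$ and subtract the $\h$-sum at the end.
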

\begin{proof}
Let $F\in C^\infty(G,\m^{\otimes p})^H$. The $H$-invariance means precisely that $(\ell\otimes\Ad^{\otimes p})\big|_H$ acts trivially on $F$. In particular $\Cas^\h_{\ell\otimes\Ad^{\otimes p}}F=0$ and thus, if $(e_i)$ denotes an orthonormal basis of $\m$,
\begin{align*}
\Cas^\g_{\ell\otimes\Ad^{\otimes p}}F&=-\sum_i(\ell\otimes\Ad^{\otimes p})_\ast(e_i)^2F\\
&=-\sum_i\left(\ell_\ast(e_i)^2F+2\ad^{\otimes p}(e_i)\ell_\ast(e_i)F+\ad^{\otimes p}(e_i)^2F\right).
\end{align*}
Let us analyze the occurring terms separately. First,
\[-\sum_i\ell_\ast(e_i)^2F=\Cas^\g_\ell F-\Cas^\h_\ell F\]
and $\Cas^\h_\ell F=\Cas^\h_{\m^{\otimes p}}F$ by the $H$-invariance of $F$. Second, recall that for any $X\in\m$, $\CR_X$ on sections of $TM^{\otimes p}$ translates into $-\ell_\ast(X)$ on $C^\infty(G,\m^{\otimes p})^H$ and thus
\[\A^\ast\CR F=\sum_i\ad_\m^{\otimes p}(e_i)\ell_\ast(e_i)F=\pr_{\m^{\otimes p}}\sum_i\ad^{\otimes p}(e_i)\ell_\ast(e_i)F.\]
Third,
\[-\sum_i\ad^{\otimes p}(e_i)^2F=\Cas^\g_{\g^{\otimes p}}F-\Cas^\h_{\m^{\otimes p}}F.\]
After orthogonally projecting to $\m^{\otimes p}$ in the fiber, we thus obtain
\[\pr_{\m^{\otimes p}}\Cas^\g_{\ell\otimes\Ad^{\otimes p}}F=\Cas^\g_\ell F-2\A^\ast\CR F+\pr_{\m^{\otimes p}}\Cas^\g_{\g^{\otimes p}}-2\Cas^\h_{\m^{\otimes p}}\]
and the assertion follows.
\end{proof}

Combining \eqref{stcas}, Lemma~\ref{ll}, Corollary~\ref{aacas2} and Lemma~\ref{1cas}, we obtain the following final formula.

\begin{kor}
\label{LLcas}
If $(M,g)$ is Einstein, then on $C^\infty(G,\Sym^p\m)^H$,
\begin{align*}
\LL&=\frac{3}{2}\Cas^\g_\ell+\pr_{\Sym^p\m}\left(\Cas^{\g}_{\Sym^p\g}-\frac{1}{2}\Cas^\g_{\ell\otimes\Ad^{\otimes p}}\right)-\frac{3}{2}\Cas^{\h}_{\Sym^p\m}-p\EC+\frac{p}{4}.
\end{align*}
\end{kor}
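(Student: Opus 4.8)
The plan is to substitute the three available expressions for the summands in Lemma~\ref{ll} and collect coefficients; the only genuine subtlety will be the passage from the full tensor power $\m^{\otimes p}$ to its symmetric part $\Sym^p\m$, everything else being formal addition.

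First I would rewrite Lemma~\ref{ll} as $\LL=\Cas^\g_\ell+\A^\ast\CR+\frac{1}{2}\A^\ast\A$ using the identification $\StL=\Cas^\g_\ell$ from \eqref{stcas}. Into the second term I insert the expression of Lemma~\ref{1cas},
\[\A^\ast\CR=\tfrac{1}{2}\Cas^\g_\ell+\tfrac{1}{2}\pr_{\m^{\otimes p}}\bigl(\Cas^\g_{\g^{\otimes p}}-\Cas^\g_{\ell\otimes\Ad^{\otimes p}}\bigr)-\Cas^\h_{\m^{\otimes p}},\]
and into the third the expression of Corollary~\ref{aacas2} (valid under the Einstein hypothesis),
\[\tfrac{1}{2}\A^\ast\A=\tfrac{1}{2}\pr_{\m^{\otimes p}}\Cas^\g_{\g^{\otimes p}}-\tfrac{1}{2}\Cas^\h_{\m^{\otimes p}}-p\EC+\tfrac{p}{4}.\]
Collecting like terms on $\m^{\otimes p}$, the coefficient of $\Cas^\g_\ell$ becomes $1+\frac{1}{2}=\frac{3}{2}$; the two contributions $\frac{1}{2}+\frac{1}{2}$ combine to give $\pr_{\m^{\otimes p}}\Cas^\g_{\g^{\otimes p}}$ with coefficient one; the term $-\frac{1}{2}\pr_{\m^{\otimes p}}\Cas^\g_{\ell\otimes\Ad^{\otimes p}}$ survives unchanged; the coefficient of $\Cas^\h_{\m^{\otimes p}}$ is $-1-\frac{1}{2}=-\frac{3}{2}$; and the scalar contributions are $-p\EC+\frac{p}{4}$. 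This reproduces the asserted formula, but with $\m^{\otimes p}$ in place of $\Sym^p\m$ and $\g^{\otimes p}$ in place of $\Sym^p\g$.

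Finally I would restrict to symmetric tensors. All operators in sight commute with the symmetrization projector: the diagonal $\g$- and $\h$-actions underlying the Casimir operators commute with permutations of tensor factors, and the fibrewise projection $\pr_{\m^{\otimes p}}=(\pr_\m)^{\otimes p}$ acts factorwise and therefore likewise commutes with permutations. Hence $\Sym^p\g\subset\g^{\otimes p}$ and $\Sym^p\m\subset\m^{\otimes p}$ are invariant subspaces, on which $\Cas^\g_{\g^{\otimes p}}$, $\Cas^\h_{\m^{\otimes p}}$ and $\Cas^\g_{\ell\otimes\Ad^{\otimes p}}$ restrict to $\Cas^\g_{\Sym^p\g}$, $\Cas^\h_{\Sym^p\m}$ and the restriction of $\Cas^\g_{\ell\otimes\Ad^{\otimes p}}$ to $C^\infty(G,\Sym^p\m)^H$, while $\pr_{\m^{\otimes p}}$ sends $\Sym^p\g$ into $\Sym^p\m$ and restricts to $\pr_{\Sym^p\m}$. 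Restricting the combined identity to $C^\infty(G,\Sym^p\m)^H$ then yields the claim. I expect this bookkeeping of symmetrization — in particular verifying that $\pr_{\m^{\otimes p}}$ genuinely preserves symmetry, so that writing $\pr_{\Sym^p\m}$ is legitimate — to be the only step demanding care, the remainder being the purely formal combination carried out above.
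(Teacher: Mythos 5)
Your proposal is correct and follows exactly the route the paper takes: its proof of Corollary~\ref{LLcas} is precisely the one-line combination of \eqref{stcas}, Lemma~\ref{ll}, Corollary~\ref{aacas2} and Lemma~\ref{1cas}, with the same coefficient bookkeeping you carry out. Your additional verification that the Casimir operators and the factorwise projection $\pr_{\m^{\otimes p}}=(\pr_\m)^{\otimes p}$ commute with symmetrization, so that the identity restricts from $\m^{\otimes p}$ to $\Sym^p\m$, is a detail the paper leaves implicit, and you supply it correctly.
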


This exact formula for $\LL$ is quite powerful provided the necessary representation-theoretic data is available. However, given a fixed Fourier mode $\gamma\in\hat G$, it is in general difficult to explicitly describe the two operators $\pr_{\Sym^p\m}\Cas^{\g}_{\Sym^p\g}$ and $\pr_{\Sym^p\m}\Cas^\g_{\ell\otimes\Ad^{\otimes p}}$ on the space $\Hom_H(V_\gamma,\Sym^p\m)$ of Fourier coefficients. As an example, the first of the two is treated in \cite{SSW22} on the generalized Wallach space $\rmE_7/\PSO(8)$, where it is possible to exploit additional symmetries. Indeed, for the general setting of normal homogeneous spaces this seems currently out of reach.

Nevertheless it is possible to obtain at least some estimates on $\LL\big|_\gamma$ in terms of the Fourier mode $\gamma$. We will do this in two ways. The first (\emph{crude}) estimate relies only on bounds for the fibrewise term $\A^\ast\A$, as well as the Casimir eigenvalue $\Cas^\g_\gamma$ which can be quickly computed by means of Freudenthal's formula. This has the striking advantage that the fibrewise data needs only be computed once. The second (\emph{refined}) estimate is a direct consequence of the formula in Corollary~\ref{LLcas}. It is sharper, but the problematic terms mentioned above need to be handled separately for each Fourier mode.

For the stability analysis of a given space $(M,g)$, both estimates work together effectively: the crude one rules out all but finitely many Fourier modes as candidates for instabilities, so that it remains to apply the refined one to each of the remaining Fourier modes. This synergy will be drawn on by the algorithm described in Sec.~\ref{sec:algo}.

Let $\lambda_{\mathrm{min}}[L]$ (resp. $\lambda_{\mathrm{max}}[L]$) denote the minimal (resp. maximal) eigenvalue of a self-adjoint linear operator $L$ on a finite-dimensional vector space.

\begin{satz}[Crude Estimate]
\label{crude}
For any $\gamma\in\hat G$,
\[\LL\big|_\gamma\geq\Cas^\g_\gamma+\frac{1}{2}\lambda_{\mathrm{min}}[\A^\ast\A]-\sqrt{\lambda_{\mathrm{max}}[\A^\ast\A]\cdot(\Cas^\g_\gamma-\lambda_{\mathrm{min}}[\Cas^\h_{\Sym^p\m}])}\]
on symmetric $p$-tensors.
\end{satz}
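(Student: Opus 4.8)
The plan is to work on a fixed Fourier mode $\gamma$, where $\LL\big|_\gamma$ is a self-adjoint operator on the finite-dimensional space of Fourier coefficients $\Hom_H(V_\gamma,\Sym^p\m)$ (with the inner product inherited from $L^2$), and to bound its quadratic form from below. Starting from Lemma~\ref{ll} together with the identity $\StL=\Cas^\g_\ell$ of \eqref{stcas}, so that $\StL\big|_\gamma$ is multiplication by the scalar $\Cas^\g_\gamma$, I would write for any $s$ in the mode $\gamma$ (identifying sections with equivariant functions via $\Gamma(VM)\cong C^\infty(G,\Sym^p\m)^H$)
\[\langle\LL s,s\rangle=\Cas^\g_\gamma\|s\|^2+\langle\A^\ast\CR s,s\rangle+\tfrac{1}{2}\langle\A^\ast\A s,s\rangle.\]
The zeroth-order term is immediate, since $\A^\ast\A$ is fibrewise self-adjoint and hence $\tfrac12\langle\A^\ast\A s,s\rangle\geq\tfrac12\lambda_{\mathrm{min}}[\A^\ast\A]\|s\|^2$. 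All the difficulty is thus concentrated in bounding the first-order term $\langle\A^\ast\CR s,s\rangle$ from below.

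For that term the decisive step is to not analyze the (non-self-adjoint) operator $\A^\ast\CR$ directly, but to move the adjoint across: by definition of $\A^\ast$ as the metric adjoint of $\A$ one has $\langle\A^\ast\CR s,s\rangle=\langle\CR s,\A s\rangle$, and Cauchy--Schwarz then gives $|\langle\CR s,\A s\rangle|\leq\|\CR s\|\cdot\|\A s\|$. The factor $\|\A s\|$ is again controlled fibrewise, namely $\|\A s\|^2=\langle\A^\ast\A s,s\rangle\leq\lambda_{\mathrm{max}}[\A^\ast\A]\|s\|^2$.

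The remaining factor $\|\CR s\|^2$ I would rewrite in terms of Casimir operators by reusing the manipulations from the proof of Lemma~\ref{1cas}. Translating $\CR_{e_i}$ into $-\ell_\ast(e_i)$ and summing over an orthonormal basis $(e_i)$ of $\m$ yields $\|\CR s\|^2=\sum_i\|\ell_\ast(e_i)s\|^2=-\sum_i\langle\ell_\ast(e_i)^2s,s\rangle$ by skew-adjointness of $\ell_\ast$, and since $\g=\h\oplus\m$ is orthogonal this equals $\langle(\Cas^\g_\ell-\Cas^\h_\ell)s,s\rangle$. On the mode $\gamma$ the first Casimir is the scalar $\Cas^\g_\gamma$, while $H$-invariance replaces $\Cas^\h_\ell$ by the fibrewise operator $\Cas^\h_{\Sym^p\m}$, so that $\|\CR s\|^2=\Cas^\g_\gamma\|s\|^2-\langle\Cas^\h_{\Sym^p\m}s,s\rangle\leq(\Cas^\g_\gamma-\lambda_{\mathrm{min}}[\Cas^\h_{\Sym^p\m}])\|s\|^2$; the radicand is automatically nonnegative, since it dominates $\|\CR s\|^2/\|s\|^2\geq0$. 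Feeding the two bounds into Cauchy--Schwarz gives $\langle\A^\ast\CR s,s\rangle\geq-\sqrt{\lambda_{\mathrm{max}}[\A^\ast\A]\cdot(\Cas^\g_\gamma-\lambda_{\mathrm{min}}[\Cas^\h_{\Sym^p\m}])}\,\|s\|^2$, and combining with the zeroth-order bound produces exactly the asserted inequality.

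I expect the only genuinely delicate point to be the bookkeeping of inner products and adjoints: one must check that the $L^2$-adjoint relation $\langle\A^\ast\CR s,s\rangle=\langle\CR s,\A s\rangle$ and the skew-adjointness of $\ell_\ast$ persist after restriction to a single Fourier mode, and that the norm identity $\|\CR s\|^2=\sum_i\|\ell_\ast(e_i)s\|^2$ holds with the correct normalization of the fibre metric and Haar measure. Everything else is a routine Cauchy--Schwarz estimate, and the loss of sharpness—replacing $\A^\ast\A$ and $\Cas^\h_{\Sym^p\m}$ by their extreme eigenvalues and discarding the precise interaction between $\CR s$ and $\A s$—is precisely what earns this the label \emph{crude}.
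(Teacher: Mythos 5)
Your proposal is correct and follows essentially the same route as the paper's proof: decompose $\LL\big|_\gamma$ via Lemma~\ref{ll} and \eqref{stcas}, bound the zeroth-order term by $\tfrac12\lambda_{\mathrm{min}}[\A^\ast\A]$, and control the cross term $\left(\A^\ast\CR F,F\right)_{L^2}=\left(\CR F,\A F\right)_{L^2}$ by Cauchy--Schwarz together with $\|\A F\|^2_{L^2}\leq\lambda_{\mathrm{max}}[\A^\ast\A]\,\|F\|^2_{L^2}$ and $\|\CR F\|^2_{L^2}=\left((\Cas^\g_\gamma-\Cas^\h_{\Sym^p\m})F,F\right)_{L^2}$. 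The only cosmetic differences are that you re-derive the identity $\CR^\ast\CR=\Cas^\g_\ell-\Cas^\h_{\Sym^p\m}$ from the manipulations in Lemma~\ref{1cas} while the paper invokes it directly, and that you bound the quadratic form of $\LL\big|_\gamma$ rather than passing through the operator norm of the self-adjoint operator $\A^\ast\CR\big|_\gamma$ --- both immaterial.
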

\begin{proof}
By \eqref{stcas} and Lemma~\ref{ll} we can write
\[\LL\big|_\gamma=\Cas^\g_\gamma+\A^\ast\CR\big|_\gamma+\frac{1}{2}\A^\ast\A\geq\Cas^\g_\gamma+\lambda_{\mathrm{min}}[\A^\ast\CR\big|_\gamma]+\frac{1}{2}\lambda_{\mathrm{min}}[\A^\ast\A].\]
Let now $F\in\Hom_H(V_\gamma,\Sym^p\m)$. Then
\begin{align*}
\left|\left(\A^\ast\CR F,F\right)_{L^2}\right|&=\left|\left(\CR F,\A F\right)_{L^2}\right|\leq\|\CR F\|_{L^2}\cdot\|\A F\|_{L^2},\\
\|\CR F\|^2_{L^2}&=\left(\CR^\ast\CR F,F\right)_{L^2}=\left((\Cas^\g_\gamma-\Cas^\h_{\Sym^p\m})F,F\right)_{L^2}\\
&\leq (\Cas^\g_\gamma-\lambda_{\mathrm{min}}[\Cas^\h_{\Sym^p\m}])\cdot\|F\|^2_{L^2},\\
\|\A F\|^2_{L^2}&=\left(\A^\ast\A F,F\right)_{L^2}\leq\lambda_{\mathrm{max}}[\A^\ast\A]\cdot\|F\|^2_{L^2}.
\end{align*}
Thus, the operator norm of the self-adjoint operator $\A^\ast\CR\big|_\gamma$ is bounded above by
\[\|\A^\ast\CR\big|_\gamma\|^2\leq\lambda_{\mathrm{max}}[\A^\ast\A]\cdot(\Cas^\g_\gamma-\lambda_{\mathrm{min}}[\Cas^\h_{\Sym^p\m}])\]
and together with $\lambda_{\mathrm{min}}[\A^\ast\CR\big|_\gamma]\geq-\|\A^\ast\CR\big|_\gamma\|$ the assertion follows.
\end{proof}

\begin{satz}[Refined Estimate]
\label{refined}
Suppose $(M,g)$ is Einstein and $\gamma\in\hat G$ is fixed. Let $\V=\spann\{\im F\,|\,F\in\Hom_H(V_\gamma,\Sym^p\m)\}\subset\Sym^p\m$, let $\W\subset\Sym^p\g$ denote the smallest $G$-invariant subspace containing $\V$, and likewise $\mathcal{U}\subset\Hom(V_\gamma,\Sym^p\g)$ the smallest $G$-invariant subspace containing $\Hom_H(V_\gamma,\Sym^p\m)$. Then
\begin{align*}
\LL\big|_\gamma&\geq\frac{3}{2}\Cas^\g_\gamma-\frac{1}{2}\lambda_{\mathrm{max}}\Big[\Cas^{\g}_{V_\gamma\otimes\Sym^p\g}\big|_{\mathcal{U}}\Big]\\
&\phantom{\geq}+\lambda_{\mathrm{min}}\Big[\Cas^{\g}_{\Sym^p\g}\big|_{\W}\Big]-\frac{3}{2}\lambda_{\mathrm{max}}\Big[\Cas^{\h}_{\Sym^p\m}\big|_{\mathcal{V}}\Big]-p\EC+\frac{p}{4}.
\end{align*}
on symmetric $p$-tensors.
\end{satz}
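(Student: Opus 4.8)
The plan is to take the exact identity of Corollary~\ref{LLcas} and estimate it term by term on a fixed Fourier mode $\gamma$. Since $\LL$ is $G$-invariant it preserves the $\gamma$-isotypic component of $C^\infty(G,\Sym^p\m)^H$ and restricts to a self-adjoint operator $\LL\big|_\gamma$ on $\Hom_H(V_\gamma,\Sym^p\m)$, so it suffices to bound the quadratic form $(\LL\big|_\gamma F,F)$ from below for $F\in\Hom_H(V_\gamma,\Sym^p\m)$. The leading term $\tfrac32\Cas^\g_\ell$ becomes the scalar $\tfrac32\Cas^\g_\gamma$ by \eqref{stcas} and the constants $-p\EC+\tfrac p4$ pass through unchanged, so everything reduces to three Casimir contributions. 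The mechanism I would use for each is the same: because $F$ takes values in $\Sym^p\m$, the orthogonal projection $\pr_{\Sym^p\m}$ may be dropped whenever it sits inside a pairing against $F$; and because every Casimir operator is positive, self-adjoint and equivariant, it preserves the smallest invariant subspace containing the relevant vectors and is there controlled by its extreme eigenvalues -- the minimum for the positively-signed term and the maximum for the two negatively-signed terms.

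Two of the three contributions are fibrewise operators and hence commute with the left-regular action, so they descend cleanly to $\Hom_H(V_\gamma,\Sym^p\m)$ by postcomposition: their quadratic forms are $\sum_k(\,\cdot\,F(u_k),F(u_k))$ over an orthonormal basis $(u_k)$ of $V_\gamma$, with each $F(u_k)$ lying in $\V$. For the term $\pr_{\Sym^p\m}\Cas^\g_{\Sym^p\g}$ I would drop the projection and note that, while $\Cas^\g_{\Sym^p\g}$ need not preserve the merely $H$-invariant space $\V$, it does preserve the $G$-invariant enlargement $\W\supseteq\V$; since it enters with a positive sign this yields the lower bound $\lambda_{\mathrm{min}}[\Cas^\g_{\Sym^p\g}\big|_\W]$. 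For $\Cas^\h_{\Sym^p\m}$ no enlargement is needed, as $\V$ is already $H$-invariant and $H$-equivariance makes $\Cas^\h_{\Sym^p\m}$ preserve it; with its negative sign this produces $-\tfrac32\lambda_{\mathrm{max}}[\Cas^\h_{\Sym^p\m}\big|_\V]$.

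The remaining term $\Cas^\g_{\ell\otimes\Ad^{\otimes p}}$ is the crux and the main obstacle, precisely because it is not $\ell$-invariant on its own and therefore does not compress naively to $\Hom_H(V_\gamma,\Sym^p\m)$. Here I would argue at the level of sections rather than Fourier coefficients: for a section $\phi_{v,F}$ of the $\gamma$-isotypic (which is again $\Sym^p\m$-valued, so the projection drops out in the pairing) I would use that $\Cas^\g_{\ell\otimes\Ad^{\otimes p}}$ is the positive Casimir of the unitary representation $\ell\otimes\Ad^{\otimes p}$ and bound its quadratic form by $\lambda_{\mathrm{max}}$ of its restriction to the smallest $(\ell\otimes\Ad^{\otimes p})$-invariant subspace containing these sections. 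The decisive computation is that on the $\gamma$-isotypic this representation acts through the tensor product of the matrix-coefficient copy $V_\gamma$ with the fibre $\Sym^p\g$, so that the operator is identified with $\Cas^\g_{V_\gamma\otimes\Sym^p\g}$, and that the relevant invariant subspace is carried by $\mathcal{U}$ once one passes through the Frobenius identification and the duality $V_\gamma\cong V_\gamma^\ast$, $\g\cong\g^\ast$, which leaves Casimir eigenvalues unchanged. This gives the upper bound $\lambda_{\mathrm{max}}[\Cas^\g_{V_\gamma\otimes\Sym^p\g}\big|_{\mathcal{U}}]$, contributing $-\tfrac12\lambda_{\mathrm{max}}[\cdots]$. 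The delicate points I expect to have to check carefully are exactly this identification of the non-invariant operator with a tensor-product Casimir and the verification that the controlling invariant subspace is $\mathcal{U}$ and not something larger; once these are in place, assembling the three bounds with their signs together with $\tfrac32\Cas^\g_\gamma$ and the constants yields the stated inequality.
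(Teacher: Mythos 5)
Your overall skeleton --- start from Corollary~\ref{LLcas}, pass to the quadratic form on a fixed Fourier mode so that the projections drop, convert $\tfrac32\Cas^\g_\ell$ into $\tfrac32\Cas^\g_\gamma$, and bound the fibrewise terms $\Cas^\g_{\Sym^p\g}$ and $\Cas^\h_{\Sym^p\m}$ by extremal eigenvalues on $\W$ and $\V$ --- is exactly the paper's, and those parts are correct. The gap sits precisely where you planted your warning flag: the treatment of $\Cas^\g_{\ell\otimes\Ad^{\otimes p}}$. You read the regular factor of this representation as the translation that moves the matrix-coefficient copy $V_\gamma$, so that on the $\gamma$-isotypic the action goes through $V_\gamma\otimes\Sym^p\g$ while the Fourier-coefficient factor $\Hom(V_\gamma,\Sym^p\g)\cong V_\gamma^\ast\otimes\Sym^p\g$ is moved only in its $\Sym^p\g$ slot. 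That reading is incompatible with Lemma~\ref{1cas}, whose proof uses that $H$-equivariance of sections means precisely that $(\ell\otimes\Ad^{\otimes p})\big|_H$ acts trivially: equivariance is a condition on translates on the argument side that pairs with the Fourier coefficients, so the regular factor in $\Cas^\g_{\ell\otimes\Ad^{\otimes p}}$ must be that translation (the paper's symbol $\ell$ notwithstanding). Concretely, on the functions $f_{v,K}(x)=K(x^{-1}v)$ this action is
\[y\cdot f_{v,K}=f_{v,\;\Ad^{\otimes p}(y)\circ K\circ\gamma(y)^{-1}},\]
trivial on the matrix-coefficient copy $V_\gamma$ and equal to the \emph{natural} $\Hom$-module structure on $\Hom(V_\gamma,\Sym^p\g)$. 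Consequently $\Cas^\g_{\ell\otimes\Ad^{\otimes p}}$ commutes with the $G$-module structure of sections, compresses \emph{exactly} (no section-level estimate needed) to the Casimir of the $G$-module $\Hom(V_\gamma,\Sym^p\g)$ acting on Fourier coefficients, and since $\Hom_H(V_\gamma,\Sym^p\m)\subset\mathcal{U}$ with $\mathcal{U}$ invariant, its quadratic form there is bounded by $\lambda_{\mathrm{max}}\big[\Cas^\g_{V_\gamma\otimes\Sym^p\g}\big|_{\mathcal{U}}\big]$. This exact compression is what lets the paper call the theorem a direct consequence of Corollary~\ref{LLcas}.

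Under your reading the identification with $\mathcal{U}$ genuinely fails, and the duality $V_\gamma\cong V_\gamma^\ast$ cannot repair it. If the regular factor moves $V_\gamma$ and the $V_\gamma^\ast$ slot of the Fourier coefficients is inert, then the smallest invariant subspace containing the sections $f_{v,F}$, $F\in\Hom_H(V_\gamma,\Sym^p\m)$, is (the inert factor tensored with) all of $V_\gamma\otimes\W$: acting by $y\in G$ and spanning over $v$ produces $V_\gamma\otimes\Ad^{\otimes p}(y)\V$ for every $y$, and the $H$-invariance of $F$ --- which couples $V_\gamma^\ast$ to $\Sym^p\m$ --- is invisible to an action that does not move $V_\gamma^\ast$. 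Duality then only matches $V_\gamma\otimes\W$ with $V_\gamma^\ast\otimes\W$, which contains $\mathcal{U}$ but is in general strictly larger; indeed the whole point of $\mathcal{U}$, and of Step~\ref{SF} of Alg.~\ref{thisalg}, is that only those constituents of $V_\gamma\otimes\W$ possessing nonzero $H$-invariants enter the bound. So your argument, carried out correctly, yields only the weaker inequality with $\lambda_{\mathrm{max}}\big[\Cas^\g_{V_\gamma\otimes\Sym^p\g}\big|_{V_\gamma\otimes\W}\big]$ in place of $\lambda_{\mathrm{max}}\big[\Cas^\g_{V_\gamma\otimes\Sym^p\g}\big|_{\mathcal{U}}\big]$, and the stated refinement does not follow from it.
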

\begin{proof}
This is a direct consequence of Corollary~\ref{LLcas} if we note that $\mathcal{U}$, $\V$ and $\W$ are by construction the smallest possible subspaces on which the eigenvalues of the respective Casimir operators are of interest. Moreoever, since $\LL$ is self-adjoint, it suffices to estimate the expression $\left(\LL F,F\right)_{L^2}$ for $F\in\Hom_H(V_\gamma,\Sym^p\m)$, whence the orthogonal projections occurring in the formula of Corollary~\ref{LLcas} can be dropped.
\end{proof}

\section{Computation of Casimir eigenvalues}
\label{sec:casimir}

In the previous section the Lichnerowicz Laplacian and related quantities were expressed solely in terms of Casimir operators. For the actual computation of their eigenvalues, a few remarks are in order.

We briefly lay out our setting of interest: let $\g$ be a compact simple Lie algebra, equipped with the standard inner product $-B_\g$, and let $\h\subset\g$ be some subalgebra. In general $\h$ splits as a direct sum into
\[\h=\h_1\oplus\ldots\oplus\h_k\oplus\z,\]
where $\h_1,\ldots,\h_k$ are simple and $\z=\z(\h)$ is the central part of $\h$.

Any irreducible (complex) $\h$-module $V$ has the form
\[V=V_{\lambda_1}\otimes\ldots\otimes V_{\lambda_k}\otimes\C_{\lambda_\z},\]
where $V_{\lambda_i}$ are the highest weight modules to the weights $\lambda_i$ of $\h_i$, and $\C_{\lambda_\z}$ is the $\z$-module associated to the weight $\lambda_\z\in\z^\ast$. We collect all those into a ``highest weight'' $\lambda=(\lambda_1,\ldots,\lambda_k,\lambda_\z)$. The Casimir constant on $V$ is then simply the sum
\begin{equation}
\Cas^\h_\lambda=\Cas^{\h_1}_{\lambda_1}+\ldots+\Cas^{\h_k}_{\lambda_k}+\Cas^\z_{\lambda_\z}.\label{cascomposite}
\end{equation}

The inner product on $\h$ (and thus on its components) shall be the restriction of $-B_\g$. We discuss the simple and abelian components separately.

\paragraph{The Casimir operator on simple subalgebras.}

Let $\lambda=\sum_ia_i\omega_i$ be a weight of a simple Lie algebra $\g$ and let $\mathbf{a}=(a_1,\ldots,a_r)^\top$ be its coefficient vector. If $C_\g$ denotes the Cartan matrix of $\g$, then
\[\langle\lambda,\lambda\rangle=\mathbf{a}^\top C_\g^{-1}\mathbf{a}\]
defines an inner product which is proportional to the one induced by $-B_\g$. To find the proportionality constant, we utilize the normalization condition \eqref{casad} for the adjoint representation of $\g$. The standard Casimir constants can thus be computed with Freudenthal's formula \eqref{freudenthal} using just the inner product $\langle\cdot,\cdot\rangle$:
\[\Cas^{\g,-B_\g}_\lambda=\frac{\langle\lambda,\lambda+2\delta_\g\rangle}{\langle\lambda_{\ad},\lambda_{\ad}+2\delta_\g\rangle}=\frac{\mathbf{a}^\top C_\g^{-1}(\mathbf{a}+\mathbf{2})}{\mathbf{a}_{\ad}^\top C_\g^{-1}(\mathbf{a}_{\ad}+\mathbf{2})},\]
where $\lambda_{\ad}$ denotes the highest root of $\g$. We recall also that $\delta_\g=\omega_1+\ldots+\omega_r$, so its coefficient vector is $\mathbf{1}=(1,\ldots,1)^\top$.

Let now $\h\subset\g$ be a simple subalgebra. The Killing forms of $\g$ and $\h$ (and thus the Casimir operators of $\h$ defined by them) differ by a positive factor $b_{\g,\h}$, i.e.
\[B_\g=b_{\g,\h}B_\h,\qquad\Cas^{\h,-B_\g}=b_{\g,\h}^{-1}\Cas^{\h,-B_\h}.\]
In order to compute $b_{\g,\h}$, consider the adjoint representation of $\g$ restricted to $\h$. A simple calculation then shows that
\[\tr\Cas^{\h,-B_\g}_\g=\dim\h.\]
Thus if $\m=\m_1\oplus\ldots\oplus\m_l$ is the irreducible decomposition of the isotropy representation of $\g/\h$, we have
\[\dim\h=b_{\g,\h}^{-1}\cdot\left(\dim\h+\sum_{j=1}^l\dim\m_j\cdot\Cas^{\h,-B_\h}_{\m_j}\right),\]
from which the quantity $b_{\g,\h}$ is easily computable.

\paragraph{The Casimir operator on abelian subalgebras.}

Let now $\h\subset\g$ be abelian and let $\t_\g\subset\g$ be a maximal abelian subalgebra containing $\h$. Customarily, the inclusion $\iota: \h\hookrightarrow\t_\g\subset\g$ is characterized by a \emph{restriction matrix}; that is, a matrix $R$ representing the transposed map ${\iota^\top: \t_\g^\ast\to\h^\ast}$, where $\t_\g^\ast$ carries the basis of fundamental weights of $\g$, and $\h^\ast$ some arbitrary basis of its integral lattice.

Let us again denote with $\mathbf{a}$ the coefficient vector of a weight $\lambda\in\h^\ast$ of $\h$ with respect to the chosen basis of $\h^\ast$. The inner product on $\h^\ast$ defined by
\[\langle\lambda,\lambda\rangle=\mathbf{a}^\top(RC_\g R^\top)^{-1}\mathbf{a}\]
is then again proportional to the one coming from $-B_\g$. Repeating the trace argument above with the (complexified) isotropy representation $\m=\C_{\lambda_1}\oplus\ldots\oplus\C_{\lambda_l}$ of $\g/\h$, the Casimir constant of $\lambda$ can now by \eqref{castorus} be computed as
\[\Cas^{\h,-B_\g}_\lambda=c_\h\cdot\langle\lambda,\lambda\rangle=c_\h\cdot\mathbf{a}^\top(RC_\g R^\top)^{-1}\mathbf{a},\]
where the proportionality constant $c_\h$ is obtained by
\[\dim\h=c_\h\cdot\sum_{j=1}^l\mathbf{a}_j^\top(RC_\g R^\top)^{-1}\mathbf{a}_j.\]

\paragraph{Computation of the Einstein constant.}

Returning to the general setting of a compact simple Lie group $G$ with a closed subgroup $H$ such that the standard metric on the homogeneous space $G/H$ is Einstein, the computation of the Einstein constant $\EC$ (and the checking of the Einstein condition) is straightforward once the necessary data is assembled. Given an irreducible decomposition $\m=\m_1\oplus\ldots\oplus\m_l$ of the isotropy representation and a restriction matrix characterizing the embedding of $H$ in $G$, one computes the Casimir constants $\Cas^{\h,-B_\g}$ on each summand $\m_j$ by means of \eqref{cascomposite} and the preceding two paragraphs. We recall that the standard metric on $G/H$ is Einstein if and only if $\Cas^{\h,-B_\g}_{\m_j}$, $j=1,\ldots,l$, all act by multiplication with the same constant $c$, in which case the Einstein constant is calculated from $c=2\EC-\frac{1}{2}$.

\section{tt-tensors and Killing vector fields}
\label{sec:tt}

In Sec.~\ref{sec:estimates} we obtained general estimates for the Lichnerowicz Laplacian on $\Sy^p(M)$ if $(M,g)$ is a normal homogeneous Einstein manifold. In order to analyze the stability of $(M,g)$ we thus specialize to $p=2$. However, only the spectrum of $\LL$ on the subspace $\TT(M)$ is of relevance for the stability discussion. Thus we ought to address the issue of distinguishing the tt-tensors among $\Sy^2(M)$.

Curiously, tt-tensors are closely related to (conformal) Killing vector fields. For a compact manifold $(M^n,g)$, let
\[\delta^\ast:\ \Sy^p(M)\to\Sy^{p+1}(M):\quad \delta^\ast h=\sum_ie^i\odot\LC_{e_i}h\]
denote the formal adjoint of the divergence operator, also called the \emph{Killing operator}. In the case $p=1$, this reduces to
\[\delta^\ast\alpha=L_{\alpha^\sharp}g,\qquad\alpha\in\Omega^1(M),\]
so that $\ker\delta^\ast\big|_{\Omega^1}$ is precisely dual to the space of Killing vector fields. Taking the trace-free part we obtain a differential operator
\[\theta:\ \Omega^1(M)\to\Sy^2_0(M):\quad \theta\alpha=\delta^\ast\alpha+\frac{2}{n}\delta\alpha\cdot g\]
whose kernel is dual to the space of \emph{conformal Killing vector fields}. The relation hinted at above is now made manifest in the short exact sequence
\begin{equation}
0\longrightarrow\ker\theta\stackrel{\subset}{\longrightarrow}\Omega^1(M)\stackrel{\theta}{\longrightarrow}\Sy^2_0(M)\stackrel{P}{\longrightarrow}\TT(M)\longrightarrow0\label{sequence}
\end{equation}
(cf.~\cite[Lem.~4.1, Rem.~4.6]{S22}), where $P$ shall be the $L^2$-orthogonal projection onto $\TT(M)$. Owing to the fact that $\LL$ commutes with every arrow in \eqref{sequence}, one may obtain a similar sequence and thus a dimension formula pertaining to the eigenspaces of $\LL$ on $\Omega^1(M)$, $\Sy^2_0(M)$ and $\TT(M)$. This has indeed been utilized in the stability analysis of the irreducible symmetric spaces of compact type \cite{S22,SW22}.

Returning to the compact, Riemannian homogeneous setting $M=G/H$, we observe that every arrow of \eqref{sequence} is $G$-equivariant. Thus, introducing the linear operators
\begin{align*}
\theta\big|_\gamma&:\ \Hom_H(V_\gamma,\m)\longrightarrow\Hom_H(V_\gamma,\Sym^2_0\m),\\
\delta\big|_\gamma&:\ \Hom_H(V_\gamma,\Sym^2_0\m)\longrightarrow\Hom_H(V_\gamma,\m),
\end{align*}
(in the notation of Sec.~\ref{sec:prelimharm}) we obtain a short exact sequence
\begin{equation}
0\longrightarrow\ker\theta\big|_\gamma\longrightarrow\Hom_H(V_\gamma,\m)\longrightarrow\Hom_H(V_\gamma,\Sym^2_0\m)\longrightarrow\ker\delta\big|_\gamma\longrightarrow0
\label{seqfourier}
\end{equation}
for each Fourier mode $\gamma\in\hat G$, from which the dimension formula
\[\dim\ker\delta\big|_\gamma=\dim\Hom_H(V_\gamma,\Sym^2_0\m)-\dim\Hom_H(V_\gamma,\m)+\dim\ker\theta\big|_\gamma\]
follows.

What to make of this? The dimensions of $\Hom_H(V_\gamma,\m)$ and $\Hom_H(V_\gamma,\Sym^2_0\m)$ may easily be computed using representation theory. We now recall the following well-known fact: If $(M,g)$ is an Einstein manifold not isometric to a round sphere, then every conformal Killing vector field is Killing, i.e. $\ker\theta=\ker\delta^\ast$ \cite[Lem.~4.2]{S22}. Recall also that Killing vector fields are the infinitesimal generators of isometries. Provided $G$ acts almost effectively on $M$, a lower dimension bound on $\ker\delta^\ast$ is thus given by the inclusion
\[\g\hookrightarrow\isom(M,g)=\ker\delta^\ast:\ X\mapsto\tilde X,\quad \tilde X_p=\frac{d}{dt}\big|_{t=0}\exp(tX).p\]
mapping each Lie algebra element to the fundamental vector field generated by it. Moreover it is not hard to show that these fundamental vector fields do under left-translation in fact transform as the adjoint representation of $\g$, that is, they are of Fourier type $\lambda_{\ad}$. The corresponding Fourier coefficient in $\Hom_H(\g,\m)$ is simply the projection $\pr_\m$.

In general, $\isom(M,g)$ might be larger than $\g$, so Killing vector fields may not be confined to the Fourier mode $\lambda_{\ad}$ alone. Strikingly, in the isotropy irreducible case, a result due to Wolf tells us that this does not happen in practice:

\begin{prop}[\cite{Wolf68}, Thm.~17.1]
\label{wolfkilling}
Let $M=G/H$ be a non-Euclidean, simply connected, isotropy irreducible space with $G$ connected and effective, $K$ compact, and with a $G$-invariant Riemannian metric $g$.
\begin{itemize}
 \item If $G/H=\rmG_2/\SU(3)$, then $(M,g)$ is the round $S^6$, so $\Isom(M,g)^0=\SO(7)$.
 \item If $G/H=\Spin(7)/\rmG_2$, then $(M,g)$ is the round $S^7$, so $\Isom(M,g)^0=\SO(8)$.
 \item In every other case, $\Isom(M,g)^0=G$.
\end{itemize}
\end{prop}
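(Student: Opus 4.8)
The plan is to pass from $G$ to the full isometry group and exploit that isotropy irreducibility is inherited by any transitive isometric extension. Write $I=\Isom(M,g)^0$. Since $g$ is $G$-invariant and $G$ acts effectively, $G$ embeds as a connected subgroup of $I$, and $I$ still acts transitively on $M$; thus $M=I/L$, where $L\supseteq H$ is the stabilizer of the base point in $I$. Both stabilizers act on the same tangent space $\m=T_{eH}M$, and since $H$ already acts irreducibly there and $L\supseteq H$, the larger group $L$ also acts irreducibly on $\m$. Hence $M=I/L$ is again a compact isotropy irreducible space; in particular the invariant metric is unique up to scale, so $(M,g)$ is Einstein.

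First I would reduce the claim to the statement that the inclusion $G\subseteq I$ is an equality unless $M$ is a round sphere. Suppose $G\subsetneq I$. Then $M$ carries two effective transitive isometric actions, one properly contained in the other, with irreducible isotropy on both sides; since $G$ is transitive on $M=I/L$ we obtain a factorization $I=G\cdot L$. This is precisely the situation governed by Onishchik's classification of inclusions of transitive compact transformation groups. Running through that list shows that a proper transitive isometric extension of an isotropy irreducible $G/H$ can occur only when $M$ is a sphere on which a group $G\neq\SO(n)$ acts transitively.

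Next I would invoke the Montgomery--Samelson and Borel classification of connected compact groups acting transitively and effectively on spheres. Among those actions, the only ones whose isotropy acts irreducibly on the tangent space, with $H$ as prescribed, are $\rmG_2$ on $S^6$ with stabilizer $\SU(3)$ and $\Spin(7)$ on $S^7$ with stabilizer $\rmG_2$; the remaining transitive sphere actions either already have $G=\SO(n)$ or fail isotropy irreducibility (for instance $\Spin(9)/\Spin(7)=S^{15}$, where the isotropy representation splits as $\R^7\oplus\R^8$). For the two exceptional cases, uniqueness of the invariant metric identifies $g$ with the round metric, whence $I=\SO(7)$ and $I=\SO(8)$ respectively. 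In every other case no proper extension exists and $\Isom(M,g)^0=G$.

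The main obstacle is the classification step: proving that a proper transitive isometric extension of an isotropy irreducible space must be one of these two round spheres. This is not elementary and rests on Onishchik's structure theory of transitive group products together with the classification of transitive actions on spheres. By contrast, the inheritance of irreducibility by $L$, the identification of the round metric via uniqueness of the invariant metric, and the final reading-off of $\SO(7)$ and $\SO(8)$ are all routine once that classification is in hand.
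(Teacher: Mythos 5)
First, a point of reference: the paper does not actually prove this statement --- Proposition~\ref{wolfkilling} is imported verbatim from Wolf (\cite{Wolf68}, Thm.~17.1) and used as a black box, so there is no internal proof to compare yours against. Judged on its own merits, your strategy is the natural one and is in the same spirit as the argument in the literature: pass to $I=\Isom(M,g)^0$, observe that the larger isotropy group $L\supseteq H$ acts on the same tangent space so that isotropy irreducibility is inherited by $I/L$, reduce a proper inclusion $G\subsetneq I$ to a factorization $I=G\cdot L$ governed by Onishchik's theory, and finish with the Montgomery--Samelson/Borel classification of transitive sphere actions together with uniqueness of the invariant metric to identify the round metrics. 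All of these individual steps are correct in the compact setting.

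There is, however, a genuine gap: you assume compactness of $M$ (and hence of $I$) without justification. The hypotheses of the proposition are only that $M$ is simply connected, non-Euclidean, isotropy irreducible, with compact isotropy group and a $G$-invariant Riemannian metric (the ``$K$ compact'' in the statement is Wolf's notation for the isotropy group, i.e.\ $H$). This scope includes non-compact spaces, for instance all irreducible Riemannian symmetric spaces of noncompact type such as $\SO_0(n,1)/\SO(n)$ --- indeed, the exclusion of Euclidean space only makes sense because non-compact cases are allowed. Your proof asserts ``hence $M=I/L$ is again a compact isotropy irreducible space'' and then invokes Onishchik's classification and the classification of compact transitive groups on spheres, none of which apply when $M$ is non-compact. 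Closing this requires a separate structural input (also due to Wolf): a non-symmetric isotropy irreducible space with compact isotropy is automatically compact with $G$ compact simple, so the non-compact cases are precisely the symmetric spaces of noncompact type, which are then disposed of by Cartan's classical determination of their isometry groups. A secondary, smaller issue: Onishchik's list of proper transitive extensions is not confined to spheres --- it also contains $\C P^{2n-1}$ (with $\Sp(n)$ transitive), $\SO(2n)/\U(n)$ (with $\SO(2n-1)$ transitive), and $\SU(2n)/\Sp(n)$ (with $\SU(2n-1)$ transitive) --- so your intermediate claim ``proper extension of an isotropy irreducible $G/H$ $\Rightarrow$ $M$ is a sphere'' is only valid after verifying that in each of these entries the subgroup presentation fails isotropy irreducibility; you assert the filtered statement but carry out the check only for sphere actions. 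Neither issue affects the paper's use of the proposition, which concerns only compact normal homogeneous spaces, but as a proof of the statement as written your argument is incomplete.
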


Even more welcomely, Wang--Ziller extended this statement to the wider class of spaces that we are interested in.

\begin{prop}[\cite{WZ85}, Thm.~5.1]
Let $M=G/H$ be a compact, simply connected, isotropy reducible homogeneous space with $G$ compact, connected, simple and effective and a normal Einstein metric $g$. Then $\Isom(M,g)^0=G$.
\end{prop}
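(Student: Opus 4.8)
The plan is to reduce the assertion to the infinitesimal statement $\isom(M,g)=\g$ and then exploit the harmonic analysis of Sec.~\ref{sec:prelimharm} together with the crude estimate. Since $M$ is compact, $L:=\Isom(M,g)^0$ is a compact connected Lie group with Lie algebra $\isom(M,g)$, and as $G\subseteq L$ already acts transitively it suffices to prove the reverse inclusion $\isom(M,g)\subseteq\g$. As recorded just above, the assignment $X\mapsto\tilde X$ embeds $\g$ into $\isom(M,g)=\ker\delta^\ast$ (injectivity by effectiveness), with image entirely of Fourier type $\lambda_{\ad}$ and Fourier coefficient $\pr_\m$. Decomposing the finite-dimensional $G$-submodule $\isom(M,g)\subseteq\Omega^1(M)$ into Fourier modes, the task splits in two: showing that no Killing field occurs in a mode $\gamma\neq\lambda_{\ad}$, and that the Killing fields sitting in the mode $\lambda_{\ad}$ are exhausted by the $\tilde X$, so that the multiplicity there is exactly $\dim\g$.

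For the first part I would use that, on an Einstein manifold, the $1$-form dual to a Killing field is coclosed and satisfies $\LL\alpha=2\EC\alpha$ (from the Weitzenböck formula $\LL=\LC^\ast\LC+\Ric$ on $1$-forms together with $\LC^\ast\LC\alpha=\Ric(\alpha)=\EC\alpha$ for Killing $\alpha$; note the spaces in question are not round spheres, so $\ker\theta=\ker\delta^\ast$). Hence a Killing field in mode $\gamma$ forces $\LL\big|_\gamma$ to admit the eigenvalue $2\EC$ on the coclosed part of $\Hom_H(V_\gamma,\m)$. Specializing Theorem~\ref{crude} (resp.\ Corollary~\ref{LLcas}) to $p=1$ and inserting $\Cas^\h_\m=2\EC-\tfrac12$, one obtains a lower bound for $\LL\big|_\gamma$ that grows with $\Cas^\g_\gamma$; since the latter tends to infinity over $\hat G$, only finitely many modes $\gamma$ can realize the eigenvalue $2\EC$. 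Any hypothetical extra Killing fields are therefore confined to finitely many low modes, which can be enumerated explicitly, while the multiplicity in the mode $\lambda_{\ad}$ is read off from $\dim\Hom_H(V_{\lambda_{\ad}},\m)$ subject to the coclosedness and eigenvalue constraints.

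The decisive — and hardest — step is to rule out these finitely many candidates and to confirm the multiplicity $\dim\g$ in the mode $\lambda_{\ad}$. Here I would pass to the structural picture: writing $\isom(M,g)=\g\oplus\mathfrak q$ as $\Ad(G)$-modules, a nonzero $\mathfrak q$ would enlarge the linear isotropy acting on $\m$, and by $L$-invariance this enlargement must preserve both the normal Einstein metric $-B_\g\big|_\m$ and the curvature of $g$. The isotropy reducibility $\m=\m_1\oplus\dots\oplus\m_l$ with $l\geq2$, combined with the simplicity of $G$, is precisely what should obstruct such an enlargement and force $\mathfrak q=0$, e.g.\ by analyzing how $\mathfrak q$ brackets with the individual summands $\m_i$ or by invoking the classification of transitive actions on the underlying manifold. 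I expect this to be the main obstacle, as the step cannot be purely formal: for the isotropy-\emph{irreducible} spaces $\rmG_2/\SU(3)$ and $\Spin(7)/\rmG_2$ the enlargement genuinely occurs (Proposition~\ref{wolfkilling}), the extra Killing fields rounding out $S^6$ and $S^7$. Thus the argument must make essential use of $l\geq2$ to exclude a transitive enlargement of $G$.
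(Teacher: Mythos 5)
The paper does not prove this proposition at all: it is quoted verbatim from Wang--Ziller \cite{WZ85}, Thm.~5.1, so there is no internal proof to compare against, and your attempt must be judged on its own. It has a genuine gap, which you in fact name yourself: the entire content of the theorem is concentrated in your final paragraph, where you only \emph{expect} that isotropy reducibility together with simplicity of $G$ forces $\mathfrak{q}=0$, ``by analyzing brackets'' or ``by invoking the classification of transitive actions''. That step is never carried out, and it is precisely what Wang--Ziller actually prove, via the structure theory of pairs $G\subset L$ of compact groups acting transitively on the same manifold; nothing in the present paper's toolkit substitutes for it. As your own comparison with $\rmG_2/\SU(3)$ and $\Spin(7)/\rmG_2$ shows, the conclusion genuinely fails without reducibility, so any proof must make essential, quantitative use of $l\geq2$; a sketch asserting that reducibility ``should obstruct'' the enlargement is a restatement of the theorem, not a proof of it.

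The harmonic-analytic reduction in your first two paragraphs is correct as far as it goes: a Killing $1$-form on an Einstein manifold does satisfy $\LL\alpha=2\EC\alpha$, and Theorem~\ref{crude} (which is independent of the proposition, so there is no circularity) does confine that eigenvalue to finitely many Fourier modes. But this reduction cannot close the argument within the paper's methods. Theorems~\ref{crude} and~\ref{refined} give only \emph{lower} bounds, so for the finitely many surviving low modes --- which always include $\lambda_{\ad}$ itself and typically several others, as the ``potential instabilities'' columns of the tables illustrate --- you cannot exclude the eigenvalue $2\EC$ without an exact computation of $\LL\big|_\gamma$, exactly the computation Sec.~\ref{sec:estimates} declares currently out of reach; nor do such estimates control the \emph{multiplicity} in the mode $\lambda_{\ad}$, where a hypothetical enlargement $\mathfrak{q}$ could also contribute copies of the adjoint representation. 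Finally, the proposition quantifies over an infinite classification list, so a mode-by-mode enumeration carried out space by space could never establish it uniformly. In short: your reduction is sound but peripheral, and the decisive structural step is missing.
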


Having established that Killing vector fields are exclusively of Fourier type $\lambda_{\ad}$ if $(M,g)$ is not isometric to a round sphere, it follows that $\ker\theta\big|_\gamma=\ker\delta^\ast\big|_\gamma=0$ if $\gamma\neq\lambda_{\ad}$, i.e.~$\theta\big|_\gamma:\ \Hom_H(V_\gamma,\m)\to\Hom_H(V_\gamma,\Sym^2_0\m)$ is injective. We can thus formulate a corollary.

\begin{kor}
\label{dimhom}
If $M=G/H$ is a compact, simply connected homogeneous space with $G$ simple and acting almost effectively, equipped with a normal Einstein metric $g$ such that $(M,g)$ is not isometric to a round sphere, then $\ker\theta=\ker\delta^\ast\cong\g$ as a $G$-module, and $\ker\theta\big|_\gamma=0$ if $\gamma\neq\lambda_{\ad}$.
\end{kor}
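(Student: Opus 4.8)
The plan is to assemble the ingredients laid out in the preceding discussion into a single chain of implications. First I would exploit the hypothesis that $(M,g)$ is Einstein but not isometric to a round sphere: by \cite[Lem.~4.2]{S22} every conformal Killing field is then Killing, which is exactly the statement $\ker\theta=\ker\delta^\ast$. This is the first claimed identity and reduces everything to the analysis of Killing vector fields. Note that the round-sphere exclusion is genuinely needed already here, since on $S^n$ the conformal Killing fields strictly contain the Killing fields.

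Next I would determine $\isom(M,g)=\ker\delta^\ast$ via a case distinction. Since $M$ is either isotropy irreducible or isotropy reducible, exactly one of Proposition~\ref{wolfkilling} or the subsequent proposition of Wang--Ziller (\cite[Thm.~5.1]{WZ85}) applies. In the isotropy reducible case the latter gives $\Isom(M,g)^0=G$ unconditionally, and no round sphere can occur there, being isotropy irreducible. In the isotropy irreducible case Proposition~\ref{wolfkilling} gives $\Isom(M,g)^0=G$ in every instance except the two exotic presentations $\rmG_2/\SU(3)\cong S^6$ and $\Spin(7)/\rmG_2\cong S^7$, both of which are round spheres and hence excluded by hypothesis. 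Either way $\Isom(M,g)^0=G$, so that $\isom(M,g)=\g$ as Lie algebras, and the almost-effective inclusion $\g\hookrightarrow\isom(M,g)$ recalled above is in fact an isomorphism.

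To pin down the Fourier type I would invoke the observation, already noted, that the fundamental vector field $\tilde X$ generated by $X\in\g$ transforms under left-translation according to the adjoint representation. Hence $\ker\delta^\ast$, viewed inside $\Omega^1(M)$ and decomposed according to \eqref{peterweyl}, is concentrated entirely in the single Fourier mode $\gamma=\lambda_{\ad}$, with Fourier coefficient the projection $\pr_\m\in\Hom_H(\g,\m)$. This proves $\ker\theta=\ker\delta^\ast\cong\g$ as $G$-modules and, simultaneously, that the Fourier coefficient space $\ker\theta\big|_\gamma=\ker\delta^\ast\big|_\gamma$ vanishes for every $\gamma\neq\lambda_{\ad}$, which is the second assertion.

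The proof is essentially bookkeeping once the two propositions and \cite[Lem.~4.2]{S22} are in hand, so the only real subtlety is checking that the single hypothesis ``not isometric to a round sphere'' simultaneously discharges two distinct obligations: it secures the equality $\ker\theta=\ker\delta^\ast$ in the first step, and it removes precisely the isotropy irreducible spaces $\rmG_2/\SU(3)$ and $\Spin(7)/\rmG_2$ for which $\Isom(M,g)^0$ strictly exceeds $G$ in the second. I would double-check that these are the only obstructions; in particular that the symmetric sphere $\SO(n+1)/\SO(n)$, though it has $\Isom(M,g)^0=G$, is correctly excluded because there $\ker\theta\neq\ker\delta^\ast$.
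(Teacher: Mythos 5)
Your proposal is correct and follows essentially the same route as the paper, which proves this corollary implicitly through the discussion preceding it: the identity $\ker\theta=\ker\delta^\ast$ from \cite[Lem.~4.2]{S22}, the case split between Proposition~\ref{wolfkilling} (Wolf) and \cite[Thm.~5.1]{WZ85} (Wang--Ziller) to get $\Isom(M,g)^0=G$, and the observation that fundamental vector fields transform in the adjoint representation, hence lie in the single Fourier mode $\lambda_{\ad}$. Your closing remarks on how the round-sphere hypothesis discharges two separate obligations (the conformal-Killing issue on $\SO(n+1)/\SO(n)$ versus the enlarged isometry groups of $\rmG_2/\SU(3)$ and $\Spin(7)/\rmG_2$) accurately capture the only subtlety in the bookkeeping.
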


When combined with \eqref{seqfourier}, we obtain a simple criterion for when a Fourier mode contains no tt-tensors, which rules them out for the stability discussion.

\begin{kor}
\label{dimhomcheck}
Under the same assumptions as in Corollary~\ref{dimhom}, a Fourier mode $\gamma\in\hat G$ contains no tt-tensors (i.e.~$\ker\delta\big|_\gamma=0$) if and only if
\[\dim\Hom_H(V_\gamma,\Sym^2_0\m)-\dim\Hom_H(V_\gamma,\m)=\begin{cases}0,&\gamma\neq\lambda_{\ad},\\-1,&\gamma=\lambda_{\ad}.\end{cases}\]
\end{kor}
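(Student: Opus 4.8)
The goal is to turn the exact sequence \eqref{seqfourier} into a counting criterion. The plan is to read off the dimension formula that already follows from \eqref{seqfourier} and then feed in the information from Corollary~\ref{dimhom} to eliminate the term $\dim\ker\theta\big|_\gamma$.

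First I would recall the dimension formula obtained from the short exact sequence \eqref{seqfourier}, namely
\[\dim\ker\delta\big|_\gamma=\dim\Hom_H(V_\gamma,\Sym^2_0\m)-\dim\Hom_H(V_\gamma,\m)+\dim\ker\theta\big|_\gamma.\]
The condition $\ker\delta\big|_\gamma=0$ (no tt-tensors in the Fourier mode $\gamma$) is, since dimensions are nonnegative, equivalent to
\[\dim\Hom_H(V_\gamma,\Sym^2_0\m)-\dim\Hom_H(V_\gamma,\m)+\dim\ker\theta\big|_\gamma=0.\]
Next I would substitute the value of $\dim\ker\theta\big|_\gamma$ supplied by Corollary~\ref{dimhom}. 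Under the stated hypotheses $(M,g)$ is not a round sphere, so $\ker\theta\cong\g$ as a $G$-module and is concentrated entirely in the Fourier mode $\lambda_{\ad}$; moreover the corresponding Fourier coefficient is one-dimensional there, coming from the single copy of the adjoint module inside the left-regular representation (this reflects $\dim\Hom_H(V_{\lambda_{\ad}},\m)\geq1$ via $\pr_\m$, and that the fundamental vector fields realize exactly $\g$). Hence $\dim\ker\theta\big|_\gamma=0$ if $\gamma\neq\lambda_{\ad}$ and $\dim\ker\theta\big|_{\lambda_{\ad}}=1$.

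Plugging these two cases into the displayed equivalence immediately yields the asserted two-case criterion: for $\gamma\neq\lambda_{\ad}$ the vanishing $\ker\delta\big|_\gamma=0$ is equivalent to $\dim\Hom_H(V_\gamma,\Sym^2_0\m)-\dim\Hom_H(V_\gamma,\m)=0$, while for $\gamma=\lambda_{\ad}$ it is equivalent to the same difference being $-1$. I expect the only genuinely delicate point to be the claim that $\dim\ker\theta\big|_{\lambda_{\ad}}=1$ rather than merely that $\ker\theta$ is supported on $\lambda_{\ad}$; this rests on the fact that $\ker\theta\cong\g$ is irreducible as a $G$-module (here $G$ is simple, so its adjoint representation is irreducible) and therefore contributes exactly one Fourier coefficient. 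Everything else is a direct bookkeeping consequence of exactness in \eqref{seqfourier} together with Corollary~\ref{dimhom}, so no further computation is required.
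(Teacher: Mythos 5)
Your proposal is correct and follows exactly the route the paper intends: the paper derives Corollary~\ref{dimhomcheck} precisely by combining the dimension formula coming from the exact sequence \eqref{seqfourier} with Corollary~\ref{dimhom}, which is what you do. Your added justification that $\dim\ker\theta\big|_{\lambda_{\ad}}=1$ (because $\ker\theta\cong\g$ is irreducible as a $G$-module, $G$ being simple, hence appears with multiplicity one in the Fourier mode $\lambda_{\ad}$) is the one detail the paper leaves implicit, and you have supplied it correctly.
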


This is a direct generalization of a result by Gasqui--Goldschmidt on irreducible symmetric spaces of compact type \cite[Prop.~2.40]{GG}.
 
\section{Algorithm for obtaining lower bounds on the Lichnerowicz Laplacian}
\label{sec:algo}

We lay out an overview of the algorithm used in our computations, without any explicit regard to implementation details. The necessary steps for the calculation of the Casimir constants have been subsumed in Sec.~\ref{sec:casimir}. In any of the occurring direct sum decompositions, multiplicities of irreducible summands can be disregarded since they are irrelevant for the computation. The algorithm has been implemented using SageMath (version 9.2) and its interface to the software package \LiE.

\begin{alg}[Lower Bounds for $\LL$]
\label{thisalg}
Let $M=G/H$ be a homogeneous space with $G$ compact and simple such that the standard metric $g$ is Einstein. Assume that $(M,g)$ is not isometric to a round sphere (required for Step \ref{ttstep}).
\begin{enumerate}
 \item Branch the adjoint representation on $\g$ to $\h$ to find the isotropy representation $\m$.
 \item Compute $\Cas^\h_\m$ and Einstein constant $\EC$.
 \item Decompose $\Sym^2_0\m=\bigoplus_{i\in I}\sv_i$ into $\h$-isotypes and $\Sym^2_0\g=\bigoplus_{j\in J}\sw_j$ into $\g$-isotypes.
 \item For each $j\in J$:
 \begin{enumerate}
  \item Compute $\Cas^\g_{\sw_j}$.
  \item Branch $\sw_j$ to $\h$.
 \end{enumerate}
 \item For each $i\in I$:
 \begin{enumerate}
  \item Compute $\Cas^\h_{\sv_i}$.
  \item Find $J_i=\{j\in J\,|\,\Hom_H(\sv_i,\sw_j)\neq0\}$.
  \item Find minimum/maximum of $\{\Cas^\g_{\sw_j}\,|\,j\in J_i\}$. These are lower/upper bounds for $\Cas^\g_{\Sym^2\g}$ on the smallest $G$-invariant subspace of $\Sym^2_0\g$ containing $\sv_i$.
  \item Combine to find bounds for $\A^\ast\A$ and $\curvendo(R)$ using \cite[Cor.~3.2]{SSW22} and Corollary~\ref{aacas2}.
  \item Check if $\curvendo(R)>\EC$, in which case the $\sv_i$ cannot contribute to instability.
 \end{enumerate}
 \item\label{SQ} Let $\mathcal{P}=\bigoplus\{\sv_i\,|\,\curvendo(R)\not>\EC\text{ by the above bounds}\}$. If $\mathcal{P}=0$, then $\curvendo(R)>\EC$ on $\Sym^2_0\m$ and hence $(M,g)$ is stable by \eqref{LLqR}.
 \item Combine the bounds for $\A^\ast\A$ with the crude estimate from Theorem~\ref{crude} and find $C>0$ such that $\LL\big|_\gamma>2\EC$ if $\Cas^\g_\gamma>C$.
 \item Find $\hat{G}_C=\{\gamma\in\hat G\,|\,\Cas^\g_\gamma\leq C\}$.
 \item\label{SF} For each $\gamma\in\hat{G}_C$:
 \begin{enumerate}
  \item Check whether $\Hom_H(V_\gamma,\mathcal{P})=0$. If so, then $\curvendo(R)h>\EC$ for all $h\in\Sy^2_0(M)$ of Fourier type $\gamma$. Thus $\gamma$ cannot contribute to instability by \eqref{LLqR}.
  \item Check whether $\dim\Hom_H(V_\gamma,\m)=\dim\Hom_H(V_\gamma,\Sym^2_0\m)$ ($+1$ if $\gamma=\lambda_{\ad}$). If so, then $\gamma$ contains no tt-tensors by Corollary~\ref{dimhomcheck} and thus cannot contribute to instability.\label{ttstep}
  \item Find $\V=\bigoplus\{\sv_i\,|\,\Hom_H(\sv_i,V_\gamma)\neq0\}$ (the relevant part of $\Sym^2_0\m$) and compute $\Cas^\h$ there to find $\lambda_{\mathrm{max}}[\Cas^\h_{\Sym^2\m}\big|_\V]$.
  \item Find $\W=\bigoplus\{\sw_j\,|\,\Hom_H(\sw_j,\V)\neq0\}$ (the relevant part of $\Sym^2_0\g$) and compute $\Cas^\g$ there to find $\lambda_{\mathrm{min}}[\Cas^\g_{\Sym^2\g}\big|_\W]$.
  \item Compute the tensor product of $\g$-modules $V_\gamma\otimes\W=\bigoplus_{j\in J_\gamma}\u_j$.
  \item For each $j\in J_\gamma$:
  \begin{enumerate}
   \item Compute $\Cas^\g_{\u_j}$.
   \item Branch $\u_j$ to $\h$ and check whether $\u_j^H=0$.
  \end{enumerate}
  \item Find $\lambda_{\mathrm{max}}[\Cas^{\g}_{V_\gamma\otimes\Sym^2\g}\big|_{\mathcal{U}}]=\max\{\Cas^\g_{\u_j}\,|\,\u_j^\h\neq0\}$.
  \item Combine Casimir bounds with the refined estimate from Theorem~\ref{refined} to find a lower bound for $\LL\big|_\gamma$.
 \end{enumerate}
\end{enumerate}
\end{alg}

\section{Results and discussion}
\label{sec:results}

\subsection{Setup and remarks}

We begin with listing our spaces of interest, namely
\begin{enumerate}
\item the compact, simply connected \emph{isotropy irreducible} homogeneous spaces $G/H$ which are not symmetric, as classified by Wolf \cite{Wolf68}, consisting of 10 infinite families I--X (see Table~\ref{famii}) and 33 exceptions\footnote{We do not list the space $\so(20)/\su(4)$ appearing in \cite{Wolf68} as it is a member of family X ($n=6$).} (see Tables~\ref{resultsiiex1} and~\ref{resultsiiex2}),
\item the compact, simply connected homogeneous spaces $G/H$ with $G$ simple where the standard metric is Einstein and which are \emph{isotropy reducible}, as classified by Wang and Ziller \cite{WZ85}, consisting of 9 infinite families XI--XIX (see Table~\ref{famnii}) and 22 exceptions (see Table~\ref{resultsniiex}).
\end{enumerate}

\begin{table}[p]
\centering
\small
\renewcommand{\arraystretch}{1.2}
\begin{tabulary}{\textwidth}{cccCcc}
No.&$\g$&$\h$&Condition&Defining rep.&$E$\\\hline\hline
I&$\su(\frac{n(n-1)}{2})$&$\su(n)$&$n\geq5$&$\eta_2$&$\frac{1}{4}+\frac{2}{n(n-2)}$\\\hline
II&$\su(\frac{n(n+1)}{2})$&$\su(n)$&$n\geq3$&$2\eta_1$&$\frac{1}{4}+\frac{2}{n(n+2)}$\\\hline
III&$\su(pq)$&$\su(p)\oplus\su(q)$&$2\leq p\leq q$, $p+q\neq4$&$\eta_1+\eta_1'$&$\frac{1}{4}+\frac{p^2+q^2}{2p^2q^2}$\\\hline
IV&$\sp(n)$&$\sp(1)\oplus\so(n)$&$n\geq3$&$\eta_1+\eta_1'$&$\frac{3}{8}+\frac{n+16}{8n(2n-1)}$\\\hline
V&$\so(n^2-1)$&$\su(n)$&$n\geq3$&$\eta_1+\eta_{n-1}$&$\frac{1}{4}+\frac{1}{n^2-3}$\\\hline
VI&$\so((n-1)(2n+1))$&$\sp(n)$&$n\geq3$&$\eta_2$&$\frac{1}{4}+\frac{1}{(n-1)(n+1)(2n-3)}$\\\hline
VII&$\so(2n^2+n)$&$\sp(n)$&$n\geq2$&$2\eta_1$&$\frac{1}{4}+\frac{1}{2n^2+n-2}$\\\hline
VIII&$\so(4n)$&$\sp(1)\oplus\sp(n)$&$n\geq2$&$\eta_1+\eta_1'$&$\frac{3}{8}+\frac{n+4}{8n(2n-1)}$\\\hline
IX&$\so(\frac{n(n-1)}{2})$&$\so(n)$&$n\geq7$&$\eta_2$&$\frac{1}{4}+\frac{2}{n^2-n-4}$\\\hline
X&$\so(\frac{(n-1)(n+2)}{2})$&$\so(n)$&$n\geq5$&$2\eta_1$&$\frac{1}{4}+\frac{2n}{(n-2)(n+2)(n+3)}$\\\hline
\end{tabulary}
\caption{The 10 families of \emph{isotropy irreducible} spaces.}
\label{famii}
\end{table}

\begin{table}[p]
\centering
\small
\renewcommand{\arraystretch}{1.2}
\begin{tabulary}{\textwidth}{cccCc}
No.&$\g$&$\h$&Condition&$E$\\\hline\hline
XIa&$\su(n)$&$\R^{n-1}$&$n\geq3$&$\frac{1}{4}+\frac{1}{2n}$\\\hline
XIb&$\su(kn)$&$k\su(n)\oplus(k-1)\R$&$k\geq3$, $n\geq2$&$\frac{1}{4}+\frac{1}{2n}$\\\hline
XII&$\su(l+pq)$&$\su(l)\oplus\su(p)\oplus\su(q)\oplus2\R$&$2\leq p\leq q$, $lpq=p^2+q^2+1$&$\frac{1}{4}+\frac{p^2+q^2}{2(p^2+1)(q^2+1)}$\\\hline
XIII&$\sp(kn)$&$k\sp(n)$&$k\geq3$, $n\geq1$&$\frac{1}{4}+\frac{2n+1}{4(kn+1)}$\\\hline
XIV&$\sp(3n-1)$&$\su(2n-1)\oplus\sp(n)\oplus\R$&$n\geq1$&$\frac{5}{12}$\\\hline
XV&$\so(4n^2)$&$2\sp(n)$&$n\geq2$&$\frac{1}{4}+\frac{2n+1}{2n(2n^2-1)}$\\\hline
XVI&$\so(n^2)$&$2\so(n)$&$n\geq3$&$\frac{1}{4}+\frac{n-1}{n(n^2-2)}$\\\hline
XVIIa&$\so(2n)$&$\R^n$&$n\geq3$&$\frac{1}{4}+\frac{1}{4(n-1)}$\\\hline
XVIIb&$\so(kn)$&$k\so(n)$&$k,n\geq3$&$\frac{1}{4}+\frac{n-1}{2(kn-2)}$\\\hline
XVIII&$\so(3n+2)$&$\su(n+1)\oplus\so(n)\oplus\R$&$n\geq3$&$\frac{5}{12}$\\\hline
XIX&$\so(n)$&$\h_1\oplus\ldots\oplus\h_l$&see \cite[Sec.~7]{LL}&$\frac{1}{4}+\frac{\dim\h_i}{(n-2)\dim\mathfrak{p}_i}$\\\hline
\end{tabulary}
\caption{The 9 families of \emph{isotropy reducible} spaces.}
\label{famnii}
\end{table}

Throughout what follows the spaces in question will be labeled only by pairs of Lie algebras $(\g,\h)$. There is a unique simply connected homogeneous manifold $M=G/H$ corresponding to each pair $(\g,\h)$, although $G$ and $H$ need of course not be unique. If $\g$ is classical, the embedding $\h\hookrightarrow\g$ will usually be defined via a \emph{defining representation} of $\h$ which can be of unitary, symplectic or orthogonal type and thus yields an embedding into $\g=\su(n)$, $\sp(n)$ or $\so(n)$, respectively. In this case it suffices to specify the highest weight of the defining representation, which we express in the usual basis of fundamental weights $(\eta_i)$. For semisimple $\h$, this basis will be the union of bases $(\eta_i)$, $(\eta_i')$, etc.~corresponding to each simple factor.

For isotropy reducible spaces, the definition of the embedding $\h\hookrightarrow\g$ tends to be a bit more involved. For the definitions of the families XI--XIX we refer the reader to \cite{LL} where they are discussed in proper detail.

The family XIX deserves special mention. It is defined as $\SO(\p)/H$ where $K/H$ is a (reducible) symmetric space as in \cite[Ex.~3]{WZ85}. Here $\p=\p_1\oplus\ldots\oplus\p_l$ denotes the isotropy representation of $K/H$. This construction actually gives rise to most of the standard homogeneous Einstein manifolds of the form $\SO(n)/H$ (except for $\Spin(8)/\rmG_2$). In particular it already completely covers the families XV--XVIII. In order to divide up the spaces in question more evenly we impose the same contraints as given in \cite[Sec.~7]{LL} and collect everything not listed among the families XV--XVIII or the exceptions into our ``band of outcasts'' XIX.

Tables~\ref{famii} and \ref{famnii} also list the Einstein constants of the families I--XIX. They can be derived \emph{a priori} using the results of Wang and Ziller. For each isotropy irreducible space $G/H$ the Casimir constant of its isotropy representation is in their notation given as $c=E(\chi)/\alpha_G$, where $\alpha_G$ and $E(\chi)$ are listed in the tables on \cite[pp.~583, 588]{WZ85}. The Einstein constant is then $\EC=\frac{1}{4}+\frac{c}{2}$. Similarly the Einstein constants of the isotropy reducible families are derived in \cite{LL}.


Tables~\ref{resultsiiex1}, \ref{resultsiiex2}, \ref{resultsiifam}, \ref{resultsniiex}, \ref{resultsniifam1}, \ref{resultsniifam2}, \ref{resultsniifam3} are to be read as follows. The column \emph{Potential instabilities} lists all Fourier modes $\gamma\in\hat G$ for which Alg.~\ref{thisalg} does \emph{not} yield the estimate $\LL\big|_\gamma>2\EC$. If the weaker estimate $\LL\big|_\gamma\geq 2\EC$ holds, the Fourier mode $\gamma$ is printed in \textcolor{blue}{blue}. Each $\gamma$ will be expressed in the basis $(\omega_i)$ of fundamental weights of $\g$.

The abbreviations in the column \emph{Notes} will stand for the following:
\begin{itemize}
 \item \textbf{SC}: stable by Step \ref{SQ} of Alg.~\ref{thisalg}. That is, $\curvendo(R)>E$ is fulfilled, which is sufficient for stability by \eqref{LLqR}.
 \item \textbf{SF}: stable by Step \ref{SF} of Alg.~\ref{thisalg}, i.e.~after applying the new estimates (Theorem~\ref{crude} and \ref{refined}).
 \item \textbf{SF}$_0$: semistable by Step \ref{SF} of Alg.~\ref{thisalg}.
\end{itemize}

We remark that the spaces $\so(7)/\g_2$ and $\g_2/\su(3)$ from Tables~\ref{resultsiiex1} and \ref{resultsiiex2} are round spheres and thus already known to be stable. Moreover the Berger space $\sp(2)/\su(2)$ can also be written as $\so(5)/\so(3)$ (the highest weight of defining representation is then $4\eta_1$) and the Fourier mode $\omega_2$ (expressed as weight of $\sp(2)$) is known to be destabilizing \cite[Sec.~5]{SWW22}. The stability of the space $\e_7/\so(8)$ in Table~\ref{resultsniiex} was shown recently \cite{SSW22}.

\subsection{Discussion of results}

We begin with discussing the isotropy irreducible case. Tables~\ref{resultsiiex1} and \ref{resultsiiex2} show the obtained results for the exceptional spaces. In some cases (mostly those with $\g$ of type $\rmE$), the curvature estimate $\curvendo(R)>E$ is already sufficient to prove stability. This phenomenon persists for the isotropy reducible spaces, cf.~Table~\ref{resultsniiex}. We note that the spaces
\[\frac{\so(8)}{\g_2},\ \frac{\so(26)}{\sp(1)\oplus\sp(5)\oplus\so(6)},\ \frac{\f_4}{\so(8)},\ \frac{\e_6}{\so(8)\oplus\R^2},\ \frac{\e_7}{3\su(2)\oplus\so(8)}\]
were shown to be $G$-unstable in \cite{L2,LL} -- this corresponds to the Fourier mode listed as ``$0$''. Remarkably, combining our analysis with the $G$-stability results of \cite{L2,LL}, which rule out the ``$0$'' mode, leads to (semi-)stability for the spaces
\[\frac{\e_6}{3\su(2)},\ \frac{\e_7}{7\su(2)},\ \frac{\e_8}{2\su(5)},\ \frac{\e_8}{2\so(8)}.\]
Notable is also the space $\frac{\e_7}{3\su(2)\oplus\so(8)}$, shown to be $G$-unstable in \cite{LL} (with a $G$-coindex of $2$). According to our analysis, ``$0$'' is the only potential instability -- so the coindex coincides with the $G$-coindex here.

Considering the isotropy irreducible families (Table~\ref{resultsiifam}), we observe varying behavior with respect to stability. Within the scope of our computational capacity, we could show stability for the following spaces:
\begin{align*}
&\text{from I:}&\su(\frac{n(n-1)}{2})&/\su(n),&8&\leq n\leq11,\\
&\text{from II:}&\su(\frac{n(n+1)}{2})&/\su(n),&6&\leq n\leq8,\\
&\text{from III:}&\su(pq)&/(\su(p)\oplus\su(q)),&p=3\text{ and }12&\leq q\leq 16,\\
&&&&p\geq 4\text{ and }24&\leq pq\leq 49,\\
&\text{from VII:}&\so(2n^2+n)&/\sp(n),&3&\leq n\leq13,\\
&\text{from IX:}&\so(\frac{n(n-1)}{2})&/\so(n),&7&\leq n\leq27.
\end{align*}

We turn next to the isotropy reducible families (Tables~\ref{resultsniifam1}, \ref{resultsniifam2} and \ref{resultsniifam3}). These were extensively studied in \cite{L1,L2,LL}, where the $G$-instability of XI--XIV, XVIIb, XVIII and XIX was already proved. This leaves open the cases XVI ($G$-stable) and XVIIa ($G$-neutrally stable) as well as the family XV where the $G$-stability type is still unknown. We managed to show stability for the following examples:
\begin{align*}
&\text{from XV:}&\so(4n^2)&/(\sp(n)\oplus\sp(n)),&3&\leq n\leq9,\\
&\text{from XVI:}&\so(n^2)&/(\so(n)\oplus\so(n)),&4&\leq n\leq16,
\end{align*}
as well as semistability of $\so(2n)/\R^n$ from XVIIa if $n=6,7$, which follows from the $G$-semistability result of \cite{LL}.

Judging from the examples seen here and comparing with \cite{L1,L2,LL}, it seems as if $G$-(semi)-stability implies (semi)-stability in the wider sense \emph{for isotropy reducible spaces}. Of particular interest as potential counterexamples to this hypothesis are the families XV, XVI and XVIIa as well as the space $\frac{\e_6}{\su(2)\oplus\so(6)}$, which is known to be $G$-neutrally stable \cite{LL}, but where we cannot yet exclude the possibility of non-invariant destabilizing directions.

The effectiveness of Alg.~\ref{thisalg}, Step \ref{ttstep} in ruling out instabilities is underwhelming in light of how useful the same method is on symmetric spaces \cite{S22,SW22}, only eliminating the Fourier modes $\omega_2$ on $\frac{\so(8)}{\su(3)}$ (from the family V), $\omega_2$ on $\frac{\so(18)}{\so(4)\oplus\sp(3)}$ (from the family XIX, defined by the isotropy representation of the symmetric space $S^4\times\frac{\SU(6)}{\Sp(3)}$), and $\omega_2$ on $\frac{\so(26)}{\sp(1)\oplus\sp(5)\oplus\so(6)}$ (exceptional). It indicates however that the absence of tt-tensors in a given Fourier mode is quite a rare phenomenon.

In general we observe a trend towards stability in the infinite families as the rank increases. Some cases (I, II, III, VII, IX, XV, XVI, XVIIa) seem to become and stay stable at some point. For others (IV, V, VI, VIII, X) there seem to be some Fourier modes that always harbor potential instabilities.

\subsection{Outlook}

In order to decide the stability of the cases with remaining potential instabilities, it would be sufficient to compute the Lichnerowicz Laplacian seperately on each of the potentially destabilizing Fourier modes. In particular this requires the problems mentioned in Sec.~\ref{sec:estimates} to be overcome. Moreover, in order to tackle the stability analysis on the countable families in their entirety, a systematic approach to (at least) estimating the Lichnerowicz Laplacian on each family would be needed.

Another matter entirely is the question of how our approach may be generalized even further to non-normal metrics -- say, metrics that are ``almost normal'' in the sense that they can be written as
\[g=\alpha_1Q\big|_{\m_1}+\alpha_2Q\big|_{\m_2},\qquad\m=\m_1\oplus\m_2,\qquad\alpha_1,\alpha_2>0\]
for some $\Ad(G)$-invariant inner product $Q$ on $\g$. This is the case for metrics in the canonical variation of a homogeneous fibration $H/K\hookrightarrow G/K\twoheadrightarrow G/H$ with normal fiber and base. Special symmetries of the form
\[[\m_1,\m_1]\subset\k,\qquad[\m_2,\m_2]\subset\k\oplus\m_1\]
that may simplify computations are available if we suppose that fiber and base are symmetric, but also for Kähler--Einstein metrics on generalized flag manifolds with $b_2(M)=1$ and two isotropy summands.

In the homogeneous fibration setting there is a natural first choice of tt-tensors to investigate, namely those of the form $h=\beta_1Q\big|_{\m_1}+\beta_2Q\big|_{\m_2}$ with $\beta_1,\beta_2\in\R$ chosen such that $\tr_gh=0$. These are an instance of \emph{Killing tensors} (that is, they are annihilated by the Killing operator $\delta^\ast$). Trace-free Killing tensors have the advantage that they realize the equality in \eqref{LLqR}, that is
\begin{equation}
\Delta_Lh=2\curvendo(R)h,\label{eqkilling}
\end{equation}and the fibrewise term $\curvendo(R)$ is in general easier to handle. Moreover, Killing tensors have often been sources of instability -- see \cite{cwang} for tensors of the particular form above on fiber bundles, and \cite{SWW22} for an exploitation of \eqref{eqkilling} to show instability of the Berger space $\SO(5)/\SO(3)$.

We aim to return to all of these issues in future work.

\clearpage

\begin{table}[p]
\centering
\footnotesize
\renewcommand{\arraystretch}{1.2}
\begin{tabular}{cccccc}\hline
$\g$&$\h$&Defining rep.&$E$&Potential instabilities&Notes\\\hline\hline
$\su(16)$&$\so(10)$&$\eta_4$&$\frac{11}{32}$&$\omega_1+\omega_{15}$&\\\hline
$\su(27)$&$\e_6$&$\eta_1$&$\frac{11}{36}$&\textcolor{blue}{$\omega_1+\omega_{26}$}&\textbf{SF}$_0$\\\hline
$\so(7)$&$\g_2$&$\eta_1$&$\frac{9}{20}$&--&\textbf{SC}\\\hline
$\so(133)$&$\e_7$&$\eta_3$&$\frac{135}{524}$&--&\textbf{SF}\\\hline
$\sp(2)$&$\su(2)$&$3\eta_1$&$\frac{9}{20}$&$\omega_2,2\omega_2,2\omega_1+\omega_2,4\omega_1$&\\\hline
$\sp(7)$&$\sp(3)$&$\eta_3$&$\frac{29}{80}$&$\omega_2$&\\\hline
$\sp(10)$&$\su(6)$&$\eta_3$&$\frac{15}{44}$&$\omega_2$&\\\hline
$\sp(16)$&$\so(12)$&$\eta_5$&$\frac{43}{136}$&$\omega_2$&\\\hline
$\sp(28)$&$\e_7$&$\eta_7$&$\frac{17}{58}$&--&\textbf{SF}\\\hline
$\so(14)$&$\g_2$&$\eta_2$&$\frac{1}{3}$&$2\omega_1$&\\\hline
$\so(16)$&$\so(9)$&$\eta_4$&$\frac{23}{56}$&$2\omega_1,\omega_4$&\\\hline
$\so(26)$&$\f_4$&$\eta_4$&$\frac{1}{3}$&$\omega_1,2\omega_1$&\\\hline
$\so(42)$&$\sp(4)$&$\eta_4$&$\frac{19}{70}$&--&\textbf{SF}\\\hline
$\so(52)$&$\f_4$&$\eta_1$&$\frac{27}{100}$&--&\textbf{SF}\\\hline
$\so(70)$&$\su(8)$&$\eta_4$&$\frac{179}{680}$&--&\textbf{SF}\\\hline
$\so(78)$&$\e_6$&$\eta_2$&$\frac{5}{19}$&--&\textbf{SF}\\\hline
$\so(128)$&$\so(16)$&$\eta_7$&$\frac{173}{672}$&--&\textbf{SF}\\\hline
$\so(248)$&$\e_8$&$\eta_8$&$\frac{125}{492}$&--&\textbf{SF}\\\hline
\end{tabular}
\caption[caption]{Results for the isotropy irreducible exceptions with $\g$ classical.}
\label{resultsiiex1}
\end{table}

\begin{table}[p]
\centering
\footnotesize
\renewcommand{\arraystretch}{1.2}
\begin{tabular}{ccccc}\hline
$\g$&$\h$&$E$&Potential instabilities&Notes\\\hline\hline
$\e_6$&$\su(3)$&$\frac{11}{36}$&--&\textbf{SC}\\\hline
$\e_6$&$3\su(3)$&$\frac{5}{12}$&$\omega_2,\omega_1+\omega_6$&\\\hline
$\e_6$&$\g_2$&$\frac{25}{72}$&--&\textbf{SC}\\\hline
$\e_6$&$\su(3)\oplus\g_2$&$\frac{19}{48}$&$\omega_2,\omega_1+\omega_6$&\\\hline
$\e_7$&$\su(3)$&$\frac{71}{252}$&--&\textbf{SC}\\\hline
$\e_7$&$\su(6)\oplus\su(3)$&$\frac{5}{12}$&$\omega_1,\omega_6$&\\\hline
$\e_7$&$\g_2\oplus\sp(3)$&$\frac{7}{18}$&--&\textbf{SF}\\\hline
$\e_7$&$\su(2)\oplus\f_4$&$\frac{47}{108}$&$\omega_6$&\\\hline
$\e_8$&$\su(9)$&$\frac{5}{12}$&--&\textbf{SC}\\\hline
$\e_8$&$\e_6\oplus\su(3)$&$\frac{5}{12}$&--&\textbf{SF}\\\hline
$\e_8$&$\g_2\oplus\f_4$&$\frac{23}{60}$&--&\textbf{SF}\\\hline
$\f_4$&$2\su(3)$&$\frac{5}{12}$&$\omega_4,\omega_1,\omega_3,2\omega_4$&\\\hline
$\f_4$&$\su(2)\oplus\g_2$&$\frac{29}{72}$&$\omega_4,2\omega_4,\omega_1+\omega_4$&\\\hline
$\g_2$&$\su(2)$&$\frac{43}{112}$&$2\omega_1,\omega_1+\omega_2,2\omega_2$&\\\hline
$\g_2$&$\su(3)$&$\frac{5}{12}$&--&\textbf{SC}\\\hline
\end{tabular}
\caption[caption]{Results for the isotropy irreducible exceptions with $\g$ exceptional. The embedding $\h\subset\g$ is always characterized by $\h$ being a maximal subalgebra.}
\label{resultsiiex2}
\end{table}

\begin{table}[p]
\centering
\footnotesize
\renewcommand{\arraystretch}{1.3}
\begin{tabulary}{\textwidth}{cCcCc}\hline
Family&Param.&$r=\rk\g$&Potential instabilities&Notes\\\hline\hline
\multirow{2}{*}{I}&$n=5,6,7$&$9,14,20$&$\omega_1+\omega_r$&\\\cline{2-5}
&$n=8\ldots11$&$27,35,44,54$&--&\textbf{SF}\\\hline\hline
\multirow{3}{*}{II}&$n=3$&$5$&$\omega_1+\omega_5$, $\omega_1+\omega_2$, $\omega_2+\omega_4$, $3\omega_1$& *\\\cline{2-5}
&$n=4,5$&$9$&$\omega_1+\omega_r$&\\\cline{2-5}
&$n=6,7,8$&$20,27,35$&--&\textbf{SF}\\\hline\hline
\multirow{6}{*}{III}&$(p,q)=(2,3)$&$5$&$\omega_1+\omega_5$, $\omega_2+\omega_4$, $2\omega_1+\omega_4$, $2\omega_1+2\omega_5$& *\\\cline{2-5}
&$(p,q)=(2,4)$&$7$&$\omega_1+\omega_7$, $\omega_4$, $\omega_1+\omega_3$, $\omega_2+\omega_6$, $2\omega_2$& *\\\cline{2-5}
&$(p,q)=(3,3)$&$8$&$\omega_1+\omega_8$, $\omega_3$, $\omega_1+\omega_2$& *\\\cline{2-5}
&$(p,q)=(2,5)$&$9$&$\omega_1+\omega_9$, $\omega_2+\omega_8$&\\\cline{2-5}
&$12\leq pq\leq 22$; or $(2,q)$ with $12\leq q\leq 20$; or $(3,q)$ with $8\leq q\leq 11$&$pq-1$&$\omega_1+\omega_r$&\\\cline{2-5}
&$(3,q)$ with $12\leq q\leq 16$; or $(p,q)$ with $p\geq 4$ and $24\leq pq\leq 49$&$pq-1$&--&\textbf{SF}\\\hline\hline
\multirow{5}{*}{IV}&$n=3$&$3$&$\omega_2$, $\omega_1+\omega_3$, $2\omega_2$, $2\omega_1+\omega_2$, $4\omega_1$, $\omega_1+\omega_2+\omega_3$, $3\omega_1+\omega_3$&\\\cline{2-5}
&$n=4$&$4$&$\omega_2$, $\omega_1+\omega_3$, $2\omega_2$, $2\omega_1+\omega_2$, $4\omega_1$, \textcolor{blue}{$\omega_2+\omega_4$, $2\omega_1+\omega_4$}&\\\cline{2-5}
&$n=5$&$5$&$\omega_2$, $\omega_1+\omega_3$, $2\omega_2$, $4\omega_1$&\\\cline{2-5}
&$n=6$&$6$&$\omega_2$, $2\omega_2$&\\\cline{2-5}
&$n=7\ldots100$&$n$&$\omega_2$&\\\hline\hline
\multirow{3}{*}{V}&$n=3$&$4$&$\omega_1$, $\omega_1+\omega_3$, $2\omega_1$, $\omega_1+\omega_2$, $\omega_1+\omega_3+\omega_4$, $2\omega_1+\omega_3$, \textcolor{blue}{$2\omega_2$}&**\\\cline{2-5}
&$n=4$&$7$&$\omega_1$, $2\omega_1$, \textcolor{blue}{$\omega_2$}&\\\cline{2-5}
&$n=5\ldots18$&$\lfloor\frac{n^2-1}{2}\rfloor$&$\omega_1$&\\\hline\hline
\multirow{2}{*}{VI}&$n=3$&$7$&$\omega_1$, $2\omega_1$&\\\cline{2-5}
&$n=4\ldots11$&$\lfloor\frac{(n-1)(2n+1)}{2}\rfloor$&$\omega_1$&\\\hline\hline
\multirow{2}{*}{VII}&$n=2$&$5$&$2\omega_1$, $\omega_3$, $\omega_4+\omega_5$, $\omega_1+\omega_2$&\\\cline{2-5}
&$n=3\ldots13$&$\lfloor\frac{2n^2+n}{2}\rfloor$&--&\textbf{SF}\\\hline\hline
\multirow{4}{*}{VIII}&$n=2$&$4$&$2\omega_4$, $\omega_2+\omega_4$, $2\omega_2$&\\\cline{2-5}
&$n=3$&$6$&$2\omega_1$, $\omega_4$, $2\omega_2$&\\\cline{2-5}
&$n=4$&$8$&$2\omega_1$, $\omega_1+\omega_7$, $\omega_4$&\\\cline{2-5}
&$n=5\ldots 75$&$2n$&$2\omega_1$&\\\hline\hline
IX&$n=7\ldots27$&$\lfloor\frac{n(n-1)}{4}\rfloor$&--&\textbf{SF}\\\hline\hline
X&$n=5\ldots22$&$\lfloor\frac{(n-1)(n+2)}{4}\rfloor$&$\omega_1$&\\\hline
\end{tabulary}
\caption[caption]{Some results for the isotropy irreducible families I--X.\\\hspace{\textwidth}\footnotesize * To obtain all potential instabilities from the listed ones, take closure under the duality automorphism of A$_r$ which sends $\omega_k\mapsto\omega_{r+1-k}$.\\\hspace{\textwidth} ** To obtain all potential instabilities from the listed ones, take closure under the automorphisms of D$_4$ which permute $\omega_1$, $\omega_3$ and $\omega_4$.}
\label{resultsiifam}
\end{table}

\begin{table}[p]
\centering
\footnotesize
\renewcommand{\arraystretch}{1.5}
\begin{tabulary}{\textwidth}{cCCcCc}\hline
$\g$&$\h$&Embedding&$E$&Potential instabilities&Notes\\\hline\hline
$\so(8)$&$\g_2$&$\g_2\stackrel{\eta_1}{\hookrightarrow}\so(7)\subset\so(8)$&$\frac{5}{12}$&$0$, $\omega_1$, $\omega_1+\omega_3$, $2\omega_1$, $\omega_1+\omega_2$, \textcolor{blue}{$\omega_1+\omega_3+\omega_4$, $2\omega_1+\omega_3$}&*\\\hline
$\so(26)$&$\sp(1)\oplus\sp(5)$ $\oplus\,\so(6)$&$\sp(1)\oplus\sp(5)\stackrel{\eta_1+\eta_1'}{\hookrightarrow}\so(20)$&$\frac{29}{80}$&$0$, $2\omega_1$&\\\hline
$\f_4$&$\so(8)$&$\so(8)\subset\so(9)\stackrel{\text{max.}}{\subset}\f_4$&$\frac{4}{9}$&$0$, $\omega_4$, $\omega_3$, $2\omega_4$, \textcolor{blue}{$\omega_3+\omega_4$}&\\\hline
$\e_6$&$3\su(2)$&$\su(2)\stackrel{2\eta_1}{\hookrightarrow}\su(3)$, $3\su(3)\stackrel{\text{max.}}{\subset}\e_6$&$\frac{5}{16}$&$0$&\cite{LL}$\Rightarrow$\textbf{SF}$_0$\\\hline
$\e_6$&$\su(2)\oplus\so(6)$&$\so(6)\subset\su(6)$, $\su(2)\oplus\su(6)\stackrel{\text{max.}}{\subset}\e_6$&$\frac{3}{8}$&$0$, $\omega_2$&\\\hline
$\e_6$&$\so(8)\oplus\R^2$&$\so(8)\oplus\R\subset\so(10)$, $\so(10)\oplus\R\stackrel{\text{max.}}{\subset}\e_6$&$\frac{5}{12}$&$0$, $\omega_2$, $\omega_1+\omega_6$, \textcolor{blue}{$\omega_4$}&\\\hline
$\e_6$&$\R^6$&max. torus&$\frac{7}{24}$&--&\textbf{SC}\\\hline
$\e_7$&$7\su(2)$&$3\so(4)\subset\su(12)$, $\su(12)\oplus\su(2)\stackrel{\text{max.}}{\subset}\e_7$&$\frac{1}{3}$&$0$&\cite{LL}$\Rightarrow$\textbf{SF}\\\hline
$\e_7$&$\so(8)$&$\so(8)\subset\su(8)\stackrel{\text{max.}}{\subset}\e_7$&$\frac{13}{36}$&--&\textbf{SC}\\\hline
$\e_7$&$3\su(2)$ $\oplus\,\so(8)$&$\so(8)\oplus\so(4)\subset\su(12)$, $\su(12)\oplus\su(2)\stackrel{\text{max.}}{\subset}\e_7$&$\frac{7}{18}$&$0$&\\\hline
$\e_7$&$\R^7$&max. torus&$\frac{5}{18}$&--&\textbf{SC}\\\hline
$\e_8$&$8\su(2)$&$4\so(4)\subset\so(16)\stackrel{\text{max.}}{\subset}\e_8$&$\frac{3}{10}$&--&\textbf{SC}\\\hline
$\e_8$&$4\su(3)$&$3\su(3)\stackrel{\text{max.}}{\subset}\e_6$, $\e_6\oplus\su(2)\stackrel{\text{max.}}{\subset}\e_8$&$\frac{19}{60}$&--&\textbf{SC}\\\hline
$\e_8$&$4\su(2)$&$\su(2)\stackrel{2\eta_1}{\hookrightarrow}\su(3)$, $4\su(3)\subset\e_8$ as above&$\frac{11}{40}$&--&\textbf{SC}\\\hline
$\e_8$&$2\su(3)$&$2\su(3)\stackrel{\eta_1+\eta_1'}{\hookrightarrow}\su(9)$, $\su(9)\stackrel{\text{max.}}{\subset}\e_8$&$\frac{17}{60}$&--&\textbf{SC}\\\hline
$\e_8$&$2\su(5)$&max. subalgebra&$\frac{7}{20}$&0&\cite{LL}$\Rightarrow$\textbf{SF}\\\hline
$\e_8$&$\so(9)$&$\so(9)\subset\su(9)\stackrel{\text{max.}}{\subset}\e_8$&$\frac{13}{40}$&--&\textbf{SC}\\\hline
$\e_8$&$\so(9)$&$\so(9)\stackrel{\eta_4}{\hookrightarrow}\so(16)\stackrel{\text{max.}}{\subset}\e_8$&$\frac{13}{40}$&--&\textbf{SC}\\\hline
$\e_8$&$2\so(8)$&$2\so(8)\subset\so(16)\stackrel{\text{max.}}{\subset}\e_8$&$\frac{11}{30}$&0&\cite{L2}$\Rightarrow$\textbf{SF}\\\hline
$\e_8$&$\so(5)$&max. subalgebra&$\frac{13}{48}$&--&\textbf{SC}\\\hline
$\e_8$&$2\sp(2)$&$2\sp(2)\stackrel{\eta_1+\eta_1'}{\hookrightarrow}\so(16)\stackrel{\text{max.}}{\subset}\e_8$&$\frac{7}{24}$&--&\textbf{SC}\\\hline
$\e_8$&$\R^8$&max. torus&$\frac{4}{15}$&--&\textbf{SC}\\\hline
\end{tabulary}
\caption[caption]{Results for the isotropy reducible exceptions.\\\hspace{\textwidth}\footnotesize * To obtain all potential instabilities/IED from the listed ones, take closure under the automorphisms of D$_4$ which permute $\omega_1$, $\omega_3$ and $\omega_4$.}
\label{resultsniiex}
\end{table}

\begin{table}[p]
\centering
\footnotesize
\renewcommand{\arraystretch}{1.2}
\begin{tabulary}{\textwidth}{cCcCc}\hline
Family&Param.&$r=\rk\g$&Potential instabilities&Notes\\\hline\hline
\multirow{4}{*}{XIa}&$n=3$&$2$&$0$, $\omega_1+\omega_2$, $3\omega_1$, $2\omega_1+2\omega_2$&*\\\cline{2-5}
&$n=4$&$3$&$0$, $\omega_1+\omega_3$, $2\omega_2$, $2\omega_1+\omega_2$, $2\omega_1+2\omega_3$&*\\\cline{2-5}
&$n=5$&$4$&$0$, $\omega_1+\omega_4$, $\omega_2+\omega_3$, \textcolor{blue}{$2\omega_1+\omega_3$}&*\\\cline{2-5}
&$n=6,7,8,9$&$5,6,7,8$&$0$, $\omega_1+\omega_r$&\\\hline\hline
\multirow{7}{*}{XIb}&$(k,n)=(3,2)$&$5$&$0$, $\omega_1+\omega_5$, $\omega_2+\omega_4$, $2\omega_3$, $2\omega_1+\omega_4$, $2\omega_1+2\omega_5$&*\\\cline{2-5}
&$(k,n)=(4,2)$&$7$&$0$, $\omega_1+\omega_7$, \textcolor{blue}{$\omega_2+\omega_6$}&\\\cline{2-5}
&$(k,n)=(3,3)$&$8$&$0$, $\omega_1+\omega_8$, $\omega_2+\omega_7$, $2\omega_1+\omega_7$, \textcolor{blue}{$2\omega_1+2\omega_8$}&*\\\cline{2-5}
&$(k,n)=(5,2)$&$9$&$0$, $\omega_1+\omega_9$&\\\cline{2-5}
&$(k,n)=(3,4)$&$11$&$0$, $\omega_1+\omega_{11}$, $\omega_2+\omega_{10}$, \textcolor{blue}{$2\omega_1+\omega_{10}$}&*\\\cline{2-5}
&$(k,n)=(4,3),(6,2)$&$11$&$0$, $\omega_1+\omega_{11}$&\\\hline\hline
XII&$(p,q,l)=(2,5,3)$&$12$&$0$, $\omega_1+\omega_{12}$&\\\hline\hline
\multirow{15}{*}{XIII}&$(k,n)=(3,1)$&$3$&$0$, $\omega_2$, $2\omega_1$, $\omega_1+\omega_3$, $2\omega_2$, $2\omega_1+\omega_2$&\\\cline{2-5}
&$(k,n)=(4,1)$&$4$&$0$, $\omega_2$, $2\omega_1$, $\omega_4$, $\omega_1+\omega_3$, $2\omega_2$&\\\cline{2-5}
&$(k,n)=(5,1)$&$5$&$0$, $\omega_2$, $2\omega_1$, $\omega_4$, $\omega_1+\omega_3$&\\\cline{2-5}
&$(k,n)=(6,1)$&$6$&$0$, $\omega_2$, $2\omega_1$, $\omega_4$&\\\cline{2-5}
&$(k,n)=(3,2)$&$6$&$0$, $\omega_2$, $2\omega_1$, $\omega_4$, $\omega_1+\omega_3$, $\omega_6$, $2\omega_2$&\\\cline{2-5}
&$(k,n)=(4,2)$&$8$&$0$, $\omega_2$, $2\omega_1$, $\omega_4$&\\\cline{2-5}
&$(k,n)=(3,3)$&$9$&$0$, $\omega_2$, $2\omega_1$, $\omega_4$, $\omega_1+\omega_3$, $2\omega_2$&\\\cline{2-5}
&$(k,n)=(10,1)$&$10$&$0$, $\omega_2$, \textcolor{blue}{$2\omega_1$}&\\\cline{2-5}
&$(k,n)=(11,1)$&$11$&$0$, $\omega_2$&\\\cline{2-5}
&$(k,n)=(3,4)$&$12$&$0$, $\omega_2$, $2\omega_1$, $\omega_4$, $\omega_1+\omega_3$&\\\cline{2-5}
&$(k,n)=(7,1)$, $(8,1)$, $(9,1)$, $(5,2)$, $(6,2)$, $(4,3)$, $(7,2)$, $(5,3)$, $(3,5)$, $(8,2)$, $(4,4)$, $(6,3)$, $(3,6)$&$kn$&$0$, $\omega_2$, $2\omega_1$&\\\hline\hline
\multirow{14}{*}{XIV}&$n=1$&$2$&$0$, $\omega_2$, $2\omega_1$, $2\omega_2$, $2\omega_1+\omega_2$, $4\omega_1$, $3\omega_2$, $2\omega_1+2\omega_2$&\\\cline{2-5}
&$n=2$&$5$&$0$, $\omega_2$, $2\omega_1$, $\omega_4$, $\omega_1+\omega_3$, $2\omega_2$, $2\omega_1+\omega_2$, $\omega_1+\omega_5$, $4\omega_1$, \textcolor{blue}{$\omega_2+\omega_4$, $2\omega_1+\omega_4$}&\\\cline{2-5}
&$n=3$&$8$&$0$, $\omega_2$, $2\omega_1$, $\omega_4$, $\omega_1+\omega_3$, $2\omega_2$, $2\omega_1+\omega_2$, $4\omega_1$, \textcolor{blue}{$\omega_6$}&\\\cline{2-5}
&$n=4$&$11$&$0$, $\omega_2$, $2\omega_1$, $\omega_4$, $\omega_1+\omega_3$, $2\omega_2$, $2\omega_1+\omega_2$, \textcolor{blue}{$4\omega_1$}&\\\cline{2-5}
&$n=5$&$14$&$0$, $\omega_2$, $2\omega_1$, $\omega_4$, $\omega_1+\omega_3$, \textcolor{blue}{$2\omega_2$, $2\omega_1+\omega_2$}&\\\cline{2-5}
&$n=6$&$17$&$0$, $\omega_2$, $2\omega_1$, $\omega_4$, \textcolor{blue}{$\omega_1+\omega_3$}&\\\cline{2-5}
&$n=7$&$20$&$0$, $\omega_2$, $2\omega_1$, \textcolor{blue}{$\omega_4$}\\\cline{2-5}
&$n=8,9,10$&$23,26,29$&$0$, $\omega_2$, $2\omega_1$&\\\hline
\end{tabulary}
\caption[caption]{Some results for the isotropy reducible families XI--XIV.\\\hspace{\textwidth}\footnotesize * To obtain all potential instabilities from the listed ones, take closure under the duality automorphism of A$_r$ which sends $\omega_k\mapsto\omega_{r+1-k}$.}
\label{resultsniifam1}
\end{table}

\begin{table}[p]
\centering
\footnotesize
\renewcommand{\arraystretch}{1.5}
\begin{tabulary}{\textwidth}{cCcCc}\hline
Family&Param.&$r=\rk\g$&Potential instabilities&Notes\\\hline\hline
\multirow{2}{*}{XV}&$n=2$&$8$&$0$, $2\omega_1$&\\\cline{2-5}
&$n=3\ldots9$&$2n^2$&--&\textbf{SF}\\\hline\hline
\multirow{2}{*}{XVI}&$n=3$&$4$&$0$, $\omega_1$, $2\omega_1$, $\omega_3$, $2\omega_4$&\\\cline{2-5}
&$n=4\ldots16$&$\lfloor\frac{n^2}{2}\rfloor$&--&\textbf{SF}\\\hline\hline
\multirow{3}{*}{XVIIa}&$n=4$&$4$&$0$, $\omega_2$, $2\omega_1$, $2\omega_3$, $2\omega_4$&\\\cline{2-5}
&$n=5$&$5$&$0$, $\omega_2$, \textcolor{blue}{$2\omega_1$}&\\\cline{2-5}
&$n=6,7$&$6,7$&$0$&\cite{LL}$\Rightarrow$\textbf{SF}$_0$\\\hline\hline
\multirow{10}{*}{XVIIb}&$(k,n)=(3,3)$&$4$&$0$, $\omega_2$, $2\omega_1$, $\omega_3$, $2\omega_4$, $\omega_1+\omega_2$&\\\cline{2-5}
&$(k,n)=(3,4)$&$6$&$0$, $\omega_2$, $2\omega_1$, $\omega_4$&\\\cline{2-5}
&$(k,n)=(6,3)$&$9$&$0$, \textcolor{blue}{$\omega_2$}&\\\cline{2-5}
&$(k,n)=(6,4)$&$12$&$0$, $\omega_2$, \textcolor{blue}{$2\omega_1$}&\\\cline{2-5}
&$(k,n)=(4,3)$, $(5,3)$, $(4,4)$, $(5,4)$; or $(k,5)$ with $3\leq k\leq 6$; or $(k,n)$ with $n\geq6$ and $kn\leq40$&$\lfloor\frac{kn}{2}\rfloor$&$0$, $\omega_2$, $2\omega_1$&\\\cline{2-5}
&$(k,n)=(7,3)$, $(8,3)$, $(9,3)$, $(7,4)$, $(10,3)$, $(7,5)$&$\lfloor\frac{kn}{2}\rfloor$&$0$&\\\hline\hline
\multirow{8}{*}{XVIII}&$n=3$&$5$&$0$, $\omega_1$, $\omega_2$, $2\omega_1$, $\omega_3$, $\omega_4$, $\omega_1+\omega_2$, $2\omega_5$, $\omega_1+\omega_3$, $2\omega_2$, \textcolor{blue}{$\omega_1+\omega_4$}&\\\cline{2-5}
&$n=4$&$7$&$0$, $\omega_2$, $2\omega_1$, $\omega_4$, $\omega_1+\omega_3$, \textcolor{blue}{$2\omega_2$}\\\cline{2-5}
&$n=5$&$8$&$0$, $\omega_2$, $2\omega_1$, $\omega_4$, \textcolor{blue}{$\omega_1+\omega_3$}&\\\cline{2-5}
&$n=6$&$10$&$0$, $\omega_2$, $2\omega_1$, $\omega_4$&\\\cline{2-5}
&$n=7\ldots 19$&$\lfloor\frac{3n}{2}\rfloor+1$&$0$, $\omega_2$, $2\omega_1$&\\\hline
\end{tabulary}
\caption[caption]{Some results for the isotropy reducible families XV--XVIII.}
\label{resultsniifam2}
\end{table}

\begin{table}[p]
\centering
\footnotesize
\renewcommand{\arraystretch}{1.5}
\begin{tabulary}{\textwidth}{cCccc}\hline
Family&$K/H=K_1/H_1\times\ldots\times K_l/H_l$&$r=\rk\g$&Potential instabilities\\\hline\hline
\multirow{12}{*}{XIX}&$\frac{\SU(3)^2}{\SO(3)^2}$, $S^3\times\SO(5)$, $S^4\times\frac{\SU(6)}{\Sp(3)}$&$5,6,9$&$0$, $\omega_1$, $2\omega_1$\\\cline{2-4}
&$S^3\times\SU(3)$&$5$&$0$, $\omega_1$, $\omega_2$, $2\omega_1$, $\omega_3$, \textcolor{blue}{$\omega_1+\omega_2$}\\\cline{2-4}
&$(S^3)^2\times\SU(3)$&$7$&$0$, $\omega_1$, $\omega_2$, $2\omega_1$\\\cline{2-4}
&$\SU(3)^2$, $(S^3)^3\times\SU(3)$&$8$&$0$, $\omega_1$, $\omega_2$, \textcolor{blue}{$2\omega_1$}\\\cline{2-4}
&$S^3\times \rmG_2$&$8$&$0$, \textcolor{blue}{$2\omega_1$}\\\cline{2-4}
&$S^3\times\SU(4)$, $\SU(3)\times\SO(5)$&$9$&$0$, $\omega_1$, \textcolor{blue}{$\omega_2$}\\\cline{2-4}
&$(S^4)^2\times\frac{\SU(6)}{\Sp(3)}$&$11$&$0$, $\omega_1$, $2\omega_1$, \textcolor{blue}{$\omega_2$}\\\cline{2-4}
&$(S^3)^k\times\SO(5)$ with $3\leq k\leq5$; or $(S^3)^k\times \rmG_2$ with $2\leq k\leq 4$; or $(S^3)^k\times\SO(5)^2$ with $0\leq k\leq 2$; or $S^3\times\SO(7)$, $S^3\times\Sp(3)$, $\SO(5)\times \rmG_2$&$\lfloor\frac{\dim K/H}{2}\rfloor$&$0$\\\cline{2-4}
&all other with $\dim K/H\leq 26$&$\lfloor\frac{\dim K/H}{2}\rfloor$&$0$, $\omega_1$\\\hline
\end{tabulary}
\caption[caption]{Some results for the isotropy reducible family XIX.\\\hspace{\textwidth}\footnotesize
$K/H$ denotes the symmetric space used in the construction of $M=\SO(\p)/H$. Concerning the individual symmetric factors $K_i/H_i$ we insist that Lie groups are presented as $\frac{H_i\times H_i}{H_i}$ and spheres as $S^k=\frac{\SO(k+1)}{\SO(k)}$.}
\label{resultsniifam3}
\end{table}

\clearpage

\end{document}